% Title: A Cuspidality Criterion for the Exterior Square Transfer of Cusp Forms on GL(4)
% Author: Mahdi Asgari & A. Raghuram 
% Subj-class: NT; RT 
% Comment: Verion 1; 27 Dec. 2007

\documentclass[11pt]{amsart}
\usepackage{amssymb,latexsym,amsmath,amsthm,amsfonts}
\textwidth=6in
\oddsidemargin=0.25in
\evensidemargin=0.25in
\textheight = 8in
\usepackage[all,cmtip]{xy}

\begin{document}

\vspace{0.2in}

\title[Cuspidality Criterion]
{A Cuspidality Criterion for the Exterior Square Transfer of Cusp Forms on ${\rm GL}(4)$}
\author[M. Asgari \& A. Raghuram]{Mahdi Asgari \ \ and \ \  A. Raghuram}
\subjclass[2000]{11F70 (11R42, 22E55, 11F80)}

\maketitle

\begin{center}{\it Dedicated to Freydoon Shahidi on the occasion of his sixtieth birthday.}\end{center}

\begin{abstract}
For a cuspidal automorphic representation $\Pi$ of ${\rm GL}(4,{\mathbb A})$,
H. Kim proved that the exterior square transfer $\wedge^2\Pi$
is an isobaric automorphic representation of ${\rm GL}(6,{\mathbb A})$. In this paper
we characterize those representations $\Pi$ for which $\wedge^2\Pi$ is 
cuspidal. 
\end{abstract}

\numberwithin{equation}{section}
\newtheorem{theorem}[equation]{Theorem}
\newtheorem{cor}[equation]{Corollary}
\newtheorem{lemma}[equation]{Lemma}
\newtheorem{proposition}[equation]{Proposition}
\newtheorem{con}[equation]{Conjecture}
\newtheorem{ass}[equation]{Assumption}
\newtheorem{defn}[equation]{Definition}
\newtheorem{exer}[equation]{Exercise}

\theoremstyle{remark}
\newtheorem{rem}[equation]{Remark}
\newtheorem{exam}[equation]{Example}

\newcommand{\C}{\mathbb C}
\newcommand{\rl}{\mathbb R}
\newcommand{\Ad}{\mathbb A}

\renewcommand{\aa}{\alpha}
\renewcommand{\a}[1]{\ensuremath{{\alpha_{#1}}}}
\newcommand{\ba}[1]{\ensuremath{{\overline{\alpha}_{#1}}}}
\newcommand{\bb}{\beta}
\renewcommand{\b}[1]{\ensuremath{{\beta_{#1}}}}
\newcommand{\e}{\epsilon}
\newcommand{\g}{\gamma}

\newcommand{\As}{\mathrm{As}}

\newcommand{\GL}{\mathrm{GL}}
\newcommand{\GSp}{\mathrm{GSp}}
\newcommand{\GSpin}{\mathrm{GSpin}}
\newcommand{\PSL}{\mathrm{PSL}}
\newcommand{\SL}{\mathrm{SL}}
\newcommand{\SO}{\mathrm{SO}}
\newcommand{\SSp}{\mathrm{Sp}}

\newcommand{\w}[1]{\ensuremath{{#1}^\vee}}

\bigskip

\section{Introduction and statement of the main theorem}
\label{sec:intro}

Let $F$ be a number field whose ad\`ele ring we denote by ${\mathbb A}_F$. 
Let $G_1$ and $G_2$ be two connected reductive linear algebraic groups over $F$ and let $^LG_1$ and $^LG_2$ 
be the corresponding $L$-groups. Given an $L$-homomorphism $r : {}^LG_1 \to {}^LG_2$, Langlands principle of 
functoriality predicts the existence of a transfer $\Pi \mapsto r(\Pi)$ of the $L$-packet of 
an automorphic representation $\Pi$ of $G_1({\mathbb A}_F)$ to an $L$-packet $r(\Pi)$ of 
automorphic representations of $G_2({\mathbb A}_F)$. Now assume that $G_2$ is a general linear group. 
We note that an $L$-packet for a general linear group is a singleton set. 
For applications of functoriality one needs to understand the image and fibers of the correspondence 
$\Pi \mapsto r(\Pi)$. In particular, it is necessary to understand what conditions on $\Pi$ ensure that the transfer $r(\Pi)$ is cuspidal.  
 
The main aim of this paper is to describe a cuspidality criterion for the 
transfer of automorphic representations from ${\rm GL}(4,{\mathbb A}_F)$ to those of ${\rm GL}(6,{\mathbb A}_F)$ 
corresponding to the exterior square map $\wedge^2 : {\rm GL}(4,{\mathbb C}) \to {\rm GL}(6,{\mathbb C})$. 
Langlands functoriality in this case is a deep theorem due to H.~Kim \cite{kim-jams}.

Roughly speaking our main result states that if $\Pi$ is a cuspidal automorphic representation of 
${\rm GL}(4,{\mathbb A}_F)$, then its exterior square transfer $\wedge^2\Pi$ is not cuspidal if and only if 
$\Pi$ is a transfer of a representation from a `smaller' group such as ${\rm GSpin}(4)$, ${\rm GSp}(4)$, or 
${\rm GL}(2)$ over a quadratic extension $E/F$.  It is easy to verify that if $\Pi$ is 
a transfer from one such `smaller' group, then $\wedge^2\Pi$ is not cuspidal. It is the converse 
which is more difficult to prove. 
Another equivalent criterion, which is easier to state, is that $\wedge^2\Pi$ is not cuspidal if and only if 
$\Pi$ is essentially self-dual (i.e., $\Pi \cong \Pi^{\vee}\otimes\chi$ for some Hecke character
$\chi$ of $F$) or it has a nontrivial self-twist (i.e., $\Pi \cong \Pi\otimes\chi$ for a nontrivial Hecke character
$\chi$ of $F$), except that in the essentially self-dual case we need to exclude a particular kind of representation. 
We are not able to satisfactorily 
explain this exception purely in automorphic terms or in terms of $L$-functions. However, it admits a 
nice explanation in terms of Galois representations as we explain after stating the main theorem. 
(See also Section \ref{sec:galois} below.)

Let $\Pi=\otimes_v \Pi_v$ and $\Sigma=\otimes_v \Sigma_v$ 
be irreducible isobaric automorphic representations of $\GL(4,\Ad_F)$ and $\GL(6,\Ad_F)$, respectively. 
Assume that $S$ is a finite set of places of $F$, including all the archimedean ones, outside of which 
both of the representations are unramified. We say $\Sigma$ is an exterior square transfer of $\Pi$ 
if for all $v\not\in S$ we have $\Sigma_v = \wedge^2(\Pi_v)$, i.e., the semi-simple conjugacy class 
in $\GL(6,\C)$ determining $\Sigma_v$ is generated by the image under $\wedge^2$ of the semi-simple 
conjugacy class in $\GL(4,\C)$ determining $\Pi_v$. By strong multiplicity one theorem 
(see Theorem \ref{thm:isobaric} below) such a $\Sigma$ would be unique. We will denote it by $\wedge^2\Pi$. 
The existence of $\wedge^2\Pi$ was established by H. Kim \cite[Theorem A]{kim-jams}. Furthermore, 
he showed that if $\Pi_v$ is not supercuspidal for the places $v$ dividing $2$ or $3$, then the 
local component $\Pi_v$ and $(\wedge^2\Pi)_v$ are compatible via the local Langlands correspondence for 
$\GL(4,F_v)$ and $\GL(6,F_v)$. The assumption at $v\vert 2,3$ was made 
because of complications posed by supercuspidal representations, especially of $\GL(4,F_v)$. 
In any event, it has no bearing on our result as we 
do not need the fact that the local components of $\wedge^2\Pi$ and $\Pi$ are compatible via 
the local Langlands correspondence at all places. We now state the main theorem of this article.

\begin{theorem} 
\label{thm:main}
Let $F$ be a number field and let $\Pi$ be a cuspidal automorphic representation of $\GL(4,\Ad_F)$. 
The following are equivalent: 

\begin{itemize}

\item[{\bf (i)}] $\wedge^2\Pi$ is not cuspidal. 

\item[{\bf (ii)}] $\Pi$ is one of the following: 

	\begin{itemize}

	\item[(a)] $\Pi=\pi_1 \boxtimes \pi_2$, a transfer from $\GL(2,\Ad_F) \times 
	\GL(2,\Ad_F)$ to $\GL(4,\Ad_F)$ via the automorphic tensor product $\boxtimes$.
	(This may also be viewed as the transfer from the split $\GSpin(4)$ to ${\rm GL}(4)$.)

	\item[(b)] $\Pi=\As(\pi)$, the Asai transfer 
	of a dihedral cuspidal automorphic representation $\pi$ of $\GL(2,\Ad_E)$
	where $E/F$ is a quadratic extension. 
	(This may also be viewed as the transfer to ${\rm GL}(4)$ from the quasi-split non-split 
	${\rm GSpin}^*(4)$ over $F$ which splits over $E$.) 

	\item[(c)] $\Pi$ is the functorial transfer of a cuspidal representation $\pi$ of 
	$\GSp(4,\Ad_F)$ associated with the natural embedding 
	of the dual group $\GSp(4,\C)$ into $\GL(4,\C)$.  The representation $\pi$ may be taken to be 
	globally generic. 

	\item[(d)] $\Pi={\rm I}_E^F(\pi)$, the automorphic induction of a cuspidal
	automorphic representation $\pi$ of $\GL(2,\Ad_E)$, where $E/F$ is a 
	quadratic extension. 

	\end{itemize}

\item[{\bf (iii)}] $\Pi$ satisfies one of the following:

\begin{itemize}

	\item[($\alpha$)]
	$\Pi \cong \w{\Pi} \otimes \chi$ for some Hecke character $\chi$ of $F$, and $\Pi$ is not the 
        Asai transfer of a nondihedral cuspidal representation. 

	\item[($\beta$)] $\Pi \cong \Pi \otimes \chi$ for a nontrivial Hecke character $\chi$ of $F$.

	\end{itemize}

\end{itemize}

\end{theorem}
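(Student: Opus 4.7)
The plan is to establish the three implications $(\mathrm{ii})\Rightarrow(\mathrm{i})$, $(\mathrm{i})\Rightarrow(\mathrm{ii})$, and $(\mathrm{ii})\Leftrightarrow(\mathrm{iii})$ separately; the substantive content lies in the second.

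\textbf{The direction $(\mathrm{ii})\Rightarrow(\mathrm{i})$} will be a local computation at almost every unramified place, exploiting the branching of $\wedge^2$ under the relevant dual group embedding. For (a), the identity $\wedge^2(V_1\otimes V_2)\cong (\mathrm{Sym}^2 V_1\otimes\det V_2)\oplus(\det V_1\otimes\mathrm{Sym}^2 V_2)$ as representations of $\GL(2,\C)\times\GL(2,\C)$ yields $\wedge^2\Pi = (\mathrm{Sym}^2\pi_1\otimes\omega_{\pi_2})\boxplus(\omega_{\pi_1}\otimes\mathrm{Sym}^2\pi_2)$, a $3+3$ isobaric decomposition; an analogous tensor-induction argument handles (b); for (c), the defining $4$-dimensional representation of $\GSp(4,\C)$ has $\wedge^2$ reducible as $5\oplus 1$, giving a $5+1$ decomposition; and (d) follows from a Mackey-type identity expressing $\wedge^2\,{\rm I}_E^F(\pi)$ as ${\rm I}_E^F(\wedge^2\pi)$ plus a quadratic twist of $\As(\pi)$.

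\textbf{The direction $(\mathrm{i})\Rightarrow(\mathrm{ii})$} rests on pole analysis of symmetric and exterior square $L$-functions. Writing $\wedge^2\Pi=\tau_1\boxplus\cdots\boxplus\tau_k$ with $k\geq 2$ and cuspidal $\tau_i$ of degrees $d_i$ summing to $6$, the key input is the factorisation
\[
L(s,\Pi\times\Pi) = L(s,\Pi,\wedge^2)\cdot L(s,\Pi,\mathrm{Sym}^2)
\]
combined with the Jacquet--Shalika control on poles of Rankin--Selberg $L$-functions. By Shahidi's theory, a pole at $s=1$ of $L(s,\Pi,\wedge^2\otimes\chi^{-1})$ for some Hecke character $\chi$ forces $\Pi$ to be a symplectic transfer from $\GSp(4,\Ad_F)$, yielding case (c); a pole at $s=1$ of $L(s,\Pi,\mathrm{Sym}^2\otimes\chi^{-1})$ forces $\Pi$ into the orthogonal similitude image in $\GL(4,\C)$, giving case (a) on the split form $\GSpin(4)$ or case (b) on the quasi-split nonsplit $\GSpin^*(4)$. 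One then works through the possible partitions $(d_1,\dots,d_k)$ of $6$ to locate such a pole in some twist of $\wedge^2\Pi$. In the remaining scenario, where $\Pi$ is not essentially self-dual, the plan is to pair cuspidal constituents of $\wedge^2\Pi$ against their duals to extract a nontrivial self-twist $\Pi\cong\Pi\otimes\eta$ by a quadratic Hecke character $\eta$; Arthur--Clozel descent then places $\Pi$ in case (d).

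\textbf{The equivalence $(\mathrm{ii})\Leftrightarrow(\mathrm{iii})$} is largely bookkeeping: each of (a), (b), (c) furnishes an explicit essential self-duality $\Pi\cong\w{\Pi}\otimes\chi$, and (d) supplies a nontrivial self-twist, while the converse reapplies the pole analysis above. \textbf{The main obstacle} will be the exceptional exclusion in $(\alpha)$: the Asai lift of a nondihedral cusp form on $\GL(2,\Ad_E)$ is essentially self-dual yet has cuspidal exterior square, and so must be removed from $(\alpha)$ by hand. In this subcase the pole of $L(s,\Pi,\mathrm{Sym}^2\otimes\chi^{-1})$ that would otherwise force $\Pi$ into (b) cancels, a cancellation whose cleanest explanation is Galois-theoretic and will be carried out in Section~\ref{sec:galois}.
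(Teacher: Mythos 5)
Your overall architecture---local isobaric decompositions for $(\mathrm{ii})\Rightarrow(\mathrm{i})$, Rankin--Selberg pole analysis plus descent for the converse, and the standard $L$-function factorisation $L^S(s,\Pi\times\Pi^\vee)=L^S(s,\Pi,\wedge^2\otimes\chi^{-1})L^S(s,\Pi,\mathrm{Sym}^2\otimes\chi^{-1})$---is in the right spirit and matches the paper's, including the $(3{,}3)$, $(5{,}1)$, and automorphic-induction decompositions in $(\mathrm{ii})\Rightarrow(\mathrm{i})$. However, there is a genuine gap in the step that has to produce $(\alpha)$ or $(\beta)$ from the non-cuspidality of $\wedge^2\Pi$, which is the technical heart of the theorem.

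You need to show, in your own phrasing, that if $\wedge^2\Pi$ is not cuspidal and $\Pi$ is not essentially self-dual, then $\Pi$ admits a nontrivial self-twist; your proposed mechanism is ``pair cuspidal constituents of $\wedge^2\Pi$ against their duals.'' This cannot work as stated: the self-duality $(\wedge^2\Pi)^\vee\cong\wedge^2\Pi\otimes\omega_\Pi^{-1}$ only pairs up constituents of $\wedge^2\Pi$ among themselves and gives no direct information about $\Pi$ itself. The actual mechanism in the paper (Propositions \ref{prop:gl1-gl3-twist} and \ref{prop:gl2-twist}, proved via Propositions \ref{LSfact} and \ref{w0-action}) is the Langlands--Shahidi intertwining-operator criterion for the maximal Levi $\GL(n)\times\GSpin(6)\subset\GSpin(2n+6)$ with $n=1,2,3$: the formula $w_0(\sigma\otimes\Pi)=\sigma^\vee\otimes(\Pi^*\otimes\omega_\sigma)$, where $\Pi^*=\Pi^\vee$ for $n$ odd and $\Pi^*=\Pi$ for $n$ even, shows that a degree-$1$ or degree-$3$ isobaric summand forces $\Pi\cong\Pi^\vee\otimes\omega_\sigma$ (essential self-duality), while a degree-$2$ summand forces $\Pi\cong\Pi\otimes\omega_\sigma$ (a self-twist), with a further twist-by-$\theta$ trick needed when $\omega_\sigma$ is trivial. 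The parity dependence in $w_0$'s action---coming from how $w_G$ acts on the $A_3$-factor of the Dynkin diagram of $D_{n+3}$ depending on whether $n+3$ is even or odd---is exactly what separates the two cases $(\alpha)$ and $(\beta)$, and it is not retrievable from the $L$-function factorisation alone. Your outline does not identify this, and without it the implication $(\mathrm{i})\Rightarrow(\mathrm{iii})$ (equivalently, the ``remaining scenario'' of your $(\mathrm{i})\Rightarrow(\mathrm{ii})$) is unsupported.

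Two smaller points. First, the exceptional clause in $(\alpha)$ does not require a ``cancellation'' or a Galois-theoretic argument as you suggest: the automorphic proof of $(\mathrm{iii})(\alpha)\Rightarrow(\mathrm{ii})$ derives $\Pi=\As(\pi)$ from descent to $\GSpin^*(4)$ whenever $L^S(s,\Pi,\mathrm{Sym}^2\otimes\chi^{-1})$ has a pole and $\mu=\omega_\Pi\chi^{-2}$ is nontrivial, and the extra clause in $(\alpha)$ is then needed precisely to force $\pi$ to be dihedral (the nondihedral Asai case having \emph{cuspidal} $\wedge^2\Pi$ by Proposition \ref{prop:asai}). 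The material on Kable's theorem is a pleasant consistency check on the Galois side, not an input to the proof. Second, in your $(\mathrm{ii})\Rightarrow(\mathrm{iii})$ you should observe that for cases (a) and (c) the extra clause in $(\alpha)$ is automatic: if $\Pi$ were also the Asai transfer of a nondihedral, then $\wedge^2\Pi$ would be simultaneously cuspidal (Proposition \ref{prop:asai}) and non-cuspidal (by the explicit decompositions), a contradiction.
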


We now briefly sketch the proof of the theorem. 
As mentioned above, it is easy to verify that {\bf (ii)} implies both {\bf (i)} and {\bf (iii)}. 
In Section \ref{sec:2-implies-1} we explicitly write down the isobaric decomposition of 
$\wedge^2\Pi$ for each of the cases {\bf (ii)}(a)--{\bf (ii)}(d). In order to check that two 
isobaric representations are isomorphic we repeatedly use a strong multiplicity one 
theorem, due to Jacquet and Shalika, recalled in Section \ref{sec:j-ps-s}.

The proof of {\bf (i)} $\Longrightarrow$ {\bf (iii)}, described in Section \ref{sec:1-implies-3}, uses some details from 
the Langlands--Shahidi machinery. We show in Proposition~\ref{prop:gl1-gl3-twist} that if $\Pi$ is not essentially
self-dual, then $\wedge^2\Pi$ cannot have a degree $1$ or degree $3$ isobaric summand. In 
Proposition~\ref{prop:gl2-twist} we verify that if $\Pi$ does not admit a nontrivial self-twist, then 
$\wedge^2\Pi$ cannot have a degree $2$ isobaric summand. 
For a cuspidal representation $\sigma$ of ${\rm GL}(m,{\mathbb A}_F)$ consider 
the Langlands $L$-function $L(s, \Pi \times \sigma, \wedge^2 \otimes\rho_m)$, where $\rho_m$ is the standard
representation of ${\rm GL}(m,{\mathbb C})$. These $L$-functions appear in the Langlands--Shahidi
machinery for a particular choice of Levi subgroup when the ambient group is ${\rm GSpin}(2m+6)$. 
We use Kim \cite[\S3]{kim-jams} to show that, under the above mentioned hypothesis on $\Pi$, these 
(partial) $L$-functions are holomorphic at $s=1$. The implication then follows from a well-known 
result of Jacquet, Piatetski-Shapiro, and Shalika recalled in Section \ref{sec:j-ps-s}. We summarize some
of the preliminaries we need from the Langlands--Shahidi machinery in Section \ref{sec:ls-theory}. 

Finally, the proof of {\bf (iii)} $\Longrightarrow$ {\bf (ii)} is given in Section \ref{sec:3-implies-2}. 
It relies on the so-called `descent theory' for classical groups. As a general reference for descent theory we refer 
to D. Soudry's exposition \cite{soud}.

In Section \ref{sec:examples} we present a few examples, some only conjectural, illustrating the above theorem. 
In Section \ref{sec:galois} we ask the question whether it is possible to see the cuspidality criterion 
from the `Galois side'. 
The question can be made precise based on the philosophy that there is a 
correspondence between automorphic representations $\pi$ of ${\rm GL}(n,{\mathbb A}_F)$
and $\ell$-adic $n$-dimensional representations $\sigma$ of the absolute Galois group of $F$. 
Let us denote this correspondence by $\pi \mapsto \sigma(\pi)$. Part of this philosophy is that $\pi$ is 
supposed to be cuspidal if and only if $\sigma(\pi)$ is irreducible. We refer to 
Ramakrishnan \cite{ramak-irr-cusp} 
for the state of the art on this issue. 
In view of the above theorem one can ask the following question. Let $\sigma$ be a four-dimensional 
irreducible Galois representation; what condition on $\sigma$ will ensure that $\wedge^2\sigma$ is irreducible? 
Upon posing this question in a talk at the Oklahoma Joint Automorphic Forms Seminar,  
A. Kable came up with a very elegant theorem which reflects the equivalence of {\bf (i)} and {\bf (iii)} 
in Theorem~\ref{thm:main}. We are grateful to him for allowing us to include his theorem and its proof 
in Section \ref{sec:galois}. Recall that in {\bf (iii)}($\alpha$) of Theorem~\ref{thm:main} above, we had to 
exclude the Asai transfer of a nondihedral cuspidal representation if $\Pi$ is essentially self-dual. 
On the Galois side, this is reflected in the fact that if a four-dimensional irreducible representation 
$\sigma$ is essentially self-dual of orthogonal type, then for $\wedge^2\sigma$ to be reducible 
the image of $\sigma$ should lie in the connected component of the identity in the algebraic 
group ${\rm GO}(4)$; see Theorem~\ref{thm:galois-side}.

Cuspidality criteria are important not only for their intrinsic value in helping us better understand a given 
instance of functoriality but also because they have important arithmetic applications. 
D. Ramakrishnan and S. Wong \cite{ramak-wang} proved a cuspidality criterion for the transfer from 
${\rm GL}(2)\times{\rm GL}(3)$ to ${\rm GL}(6)$ and used it to construct new cuspidal cohomology 
classes for ${\rm GL}(6)$. We refer to \cite{raghuram-shahidi-aims} for a brief 
survey of cohomological applications of Langlands functoriality.
H. Kim and F. Shahidi \cite{kim-shahidi-duke} proved a cuspidality criterion for the 
symmetric fourth transfer from ${\rm GL}(2)$ to ${\rm GL}(5)$, which has been used in 
the study of special values of symmetric power $L$-functions by the second author and 
F. Shahidi \cite{raghuram-shahidi-periodrelns}.  Such a potential arithmetic application 
was indeed our original motivation to seek a cuspidality criterion for the exterior square transfer.

\bigskip

{\Small
\noindent{\it Acknowledgements:} It is a pleasure to thank H. Kim for his encouragement of this project. 
We also thank A.~Kable, J.-P.~Labesse, and R.~Zierau for helpful discussions. 
This work is partially supported by an Alexander von Humboldt Research Fellowship for the first author, and 
by an ASR +1 grant from the College of Arts and Science of the Oklahoma State University for the second author.}

\section{Some preliminaries}
\label{sec:prelims}

In this section we collect some results we repeatedly use in later sections. To begin, we recall a 
theorem due to Jacquet and Shalika concerning strong multiplicity one for isobaric automorphic representations.
Then we recall an analytic criterion in terms of Rankin--Selberg $L$-functions, due to Jacquet, Piatetski-Shapiro
and Shalika, that characterizes when two cuspidal automorphic representations are equivalent. 
Next, we note that the natural transfer of automorphic representations of a quasi-split non-split 
general spin group $\GSpin^*(4)$ to $\GL(4)$ is in fact the Asai transfer. Finally, we recall some details 
from the Langlands--Shahidi machinery that will be of use to us, particularly when the ambient group is 
$\GSpin(m)$ with $m=8,10$ or $12$.

\subsection{Some results of Jacquet, Piatetski-Shapiro, and Shalika}
\label{sec:j-ps-s}
The following strong multiplicity one theorem for isobaric representations is 
due to Jacquet and Shalika \cite{jacquet-shalika-1, jacquet-shalika-2}.

\begin{theorem}
\label{thm:isobaric}
Let $\pi_1$ and $\pi_2$ be isobaric automorphic representations of ${\rm GL}(n,{\mathbb A}_F)$. 
Let $S$ be a finite set of places of $F$, containing the archimedean places, such that both $\pi_1$ 
and $\pi_2$ are unramified outside $S$. If $\pi_{1,v} \cong \pi_{2,v}$ for all $v \notin S$, 
then $\pi_1 \cong \pi_2$.
\end{theorem}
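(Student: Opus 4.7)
The plan is to detect the cuspidal constituents of an isobaric representation by the order of the pole at $s=1$ of a Rankin--Selberg $L$-function twisted by a cuspidal representation, and to use this to show that the multisets of constituents of $\pi_1$ and $\pi_2$ coincide.

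First I would invoke the Langlands classification of isobaric representations to write
\[
\pi_1 = \sigma_1 \boxplus \cdots \boxplus \sigma_r, \qquad \pi_2 = \tau_1 \boxplus \cdots \boxplus \tau_s,
\]
where each $\sigma_i$ is a cuspidal automorphic representation of $\GL(n_i,\Ad_F)$ and each $\tau_j$ is a cuspidal automorphic representation of $\GL(m_j,\Ad_F)$, with $\sum_i n_i = \sum_j m_j = n$.

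Next, fix an arbitrary cuspidal automorphic representation $\eta$ of $\GL(m,\Ad_F)$ for some $m \leq n$, and let $T \supseteq S$ be a finite set of places also containing those at which $\eta$ ramifies. Since $\pi_{1,v} \cong \pi_{2,v}$ for every $v \notin S$, the local Euler factors of $\pi_1 \times \w{\eta}$ and $\pi_2 \times \w{\eta}$ agree for all $v \notin T$, so
\[
L^T(s, \pi_1 \times \w{\eta}) \; = \; L^T(s, \pi_2 \times \w{\eta})
\]
as Dirichlet series in the domain of absolute convergence, and hence as meromorphic functions. The factorization of Rankin--Selberg $L$-functions along isobaric sums yields
\[
L^T(s, \pi_1 \times \w{\eta}) \; = \; \prod_{i=1}^{r} L^T(s, \sigma_i \times \w{\eta}),
\]
and the analogous identity for $\pi_2$.

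By the theorem of Jacquet, Piatetski-Shapiro, and Shalika, each factor $L^T(s, \sigma_i \times \w{\eta})$ has a simple pole at $s=1$ precisely when $n_i = m$ and $\sigma_i \cong \eta$, and is holomorphic and non-zero at $s=1$ otherwise. Thus the order of the pole of $L^T(s, \pi_1 \times \w{\eta})$ at $s=1$ equals the multiplicity of $\eta$ among the cuspidal constituents of $\pi_1$, and similarly for $\pi_2$. Equality of the two $L$-functions therefore forces these multiplicities to agree for every choice of cuspidal $\eta$, so the multisets $\{\sigma_1,\ldots,\sigma_r\}$ and $\{\tau_1,\ldots,\tau_s\}$ coincide and hence $\pi_1 \cong \pi_2$. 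The only real obstacle is the non-vanishing and simple-pole behavior of Rankin--Selberg $L$-functions at $s=1$ (together with multiplicativity along isobaric sums); granting those inputs of Jacquet--Piatetski-Shapiro--Shalika, the remainder of the argument is essentially formal.
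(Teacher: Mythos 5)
The paper offers no proof of this theorem; it is stated as a recalled result with a citation to Jacquet--Shalika, and your sketch is precisely the argument in those references (detecting cuspidal constituents by the pole at $s=1$ of $L^T(s,\pi\times\w{\eta})$ as $\eta$ ranges over cuspidal representations, and using multiplicativity of Rankin--Selberg $L$-functions along isobaric sums). The one input you should flag explicitly is that you need not just the pole statement of Theorem~\ref{thm:rankin-selberg} but also nonvanishing of $L^T(s,\sigma_i\times\w{\eta})$ at $s=1$ when $\sigma_i\not\cong\eta$ --- otherwise a zero of one factor could cancel a pole of another and hide a constituent; you do note this, and it is indeed part of the Jacquet--Piatetski-Shapiro--Shalika/Shahidi theory, so the argument is sound.
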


Another useful technical tool for us is the following theorem, due to 
Jacquet, Piatetski-Shapiro and Shalika \cite{jacquet-ps-shalika}, concerning Rankin--Selberg $L$-functions. 

\begin{theorem}
\label{thm:rankin-selberg}
Let $\pi_1$ and $\pi_2$ be cuspidal automorphic representations of ${\rm GL}(n_1,{\mathbb A}_F)$
and ${\rm GL}(n_2,{\mathbb A}_F)$, respectively. Let $S$ be a finite set of places containing the archimedean 
places of $F$ and the ramified places of $\pi_1$ and $\pi_2$. The 
partial Rankin--Selberg $L$-function $L^S(s,\pi_1\times\pi_2)$ is holomorphic at $s=1$ unless $n_1=n_2$ and 
$\pi_2 \cong \pi_1^{\vee}$, and in which case it has a simple pole at $s=1$.
\end{theorem}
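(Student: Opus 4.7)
My plan is to follow the original construction of Jacquet, Piatetski-Shapiro, and Shalika, splitting into the two cases $n_1 \neq n_2$ and $n_1 = n_2$, which are handled by structurally different global integral representations.

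For $n_1 \neq n_2$, say $n_1 > n_2$, the Rankin-Selberg integral
\[
\Psi(s, \varphi_1, \varphi_2) = \int_{\GL(n_2,F) \backslash \GL(n_2,\Ad_F)} \varphi_1\!\begin{pmatrix} g & \\ & I_{n_1 - n_2}\end{pmatrix} \varphi_2(g)\, |\det g|^{s - (n_1 - n_2)/2}\, dg,
\]
prepended by a unipotent integration against a generic character when $n_1 - n_2 > 1$, converges absolutely for every $s \in \C$: the cuspidality of $\varphi_1$ yields rapid decay of the integrand uniformly on vertical strips. Hence $\Psi$ is entire. Unfolding against the Whittaker model factors $\Psi$ as an Eulerian product whose unramified factors equal $L(s, \pi_{1,v} \times \pi_{2,v})$, so $L^S(s, \pi_1 \times \pi_2)$ is entire and in particular holomorphic at $s=1$.

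For $n_1 = n_2 = n$, I would use the integral representation
\[
I(s, \varphi_1, \varphi_2, \Phi) = \int_{Z(\Ad_F) \GL(n,F) \backslash \GL(n,\Ad_F)} \varphi_1(g)\, \varphi_2(g)\, E(g, s, \Phi)\, dg,
\]
where $E(g, s, \Phi)$ is the mirabolic Eisenstein series attached to a Schwartz-Bruhat function $\Phi$ on $\Ad_F^n$. The key analytic input is that $E(g, s, \Phi)$ has a simple pole at $s=1$ whose residue is a nonzero constant times $\Phi(0)$, independent of $g$. Consequently $I$ has at worst a simple pole at $s=1$, and its residue is proportional to the Petersson pairing $\int \varphi_1\, \varphi_2\, dg$ on $\pi_1 \times \pi_2$. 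By the theory of cuspidal representations, this pairing is nondegenerate (for matching central characters) precisely when $\pi_2 \cong \pi_1^{\vee}$, and vanishes identically otherwise. An unfolding identifies $I$, up to finitely many local factors, with $L^S(s, \pi_1 \times \pi_2)$.

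The main technical obstacle is the behavior at the ramified places in $S$: the global integral equals $L^S(s, \pi_1 \times \pi_2)$ only up to a finite product of local zeta integrals at $v \in S$, and one must choose $\varphi_{1,v}$, $\varphi_{2,v}$, and $\Phi_v$ so that these local factors are simultaneously holomorphic and nonvanishing at $s=1$. This is dispatched by the local theory of Rankin-Selberg integrals, which allows one to realize any prescribed entire nonzero function of $s$ in the relevant class as a local zeta integral via an appropriate choice of Whittaker vectors and sections. With such choices the pole structure of the global integral transfers directly to $L^S(s, \pi_1 \times \pi_2)$, producing the simple pole at $s=1$ exactly when $n_1 = n_2$ and $\pi_2 \cong \pi_1^{\vee}$, and holomorphy at $s=1$ in all other cases.
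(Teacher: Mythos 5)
The paper does not prove this theorem; it is recalled as a known result, attributed to Jacquet, Piatetski-Shapiro, and Shalika \cite{jacquet-ps-shalika}, and used as a black box throughout. Your sketch correctly reconstructs the JPSS argument in outline: the split into the cases $n_1\neq n_2$ (where the global integral converges absolutely for all $s$ by cuspidal rapid decay, hence is entire) and $n_1=n_2$ (where the global integral against the mirabolic Eisenstein series $E(g,s,\Phi)$ inherits the unique simple pole of $E$ at $s=1$, with residue proportional to $\Phi(0)$ times the Petersson pairing), together with the observation that this pairing is nonzero precisely when $\pi_2\cong\pi_1^\vee$. Two details deserve tightening, though neither is a fatal gap. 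In the equal-rank case the integral over $Z(\Ad_F)\GL(n,F)\backslash\GL(n,\Ad_F)$ only makes sense once the integrand is $Z(\Ad_F)$-invariant, which requires reconciling $\omega_{\pi_1}\omega_{\pi_2}$ with the central transformation law of $E(g,s,\Phi)$; this is automatic when $\pi_2\cong\pi_1^\vee$ and otherwise is handled by a preliminary twist. And the local zeta integrals at $v\in S$ do not realize arbitrary nonzero entire functions as your last paragraph suggests; rather, they span the $\C[q_v^{\pm s}]$-fractional ideal generated by $L(s,\pi_{1,v}\times\pi_{2,v})$, whose generator is holomorphic and nonvanishing at $s=1$, and it is this weaker but correct statement that transfers the pole structure of the global integral to $L^S(s,\pi_1\times\pi_2)$.
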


\subsection{The Asai transfer and the quasi-split non-split $\GSpin^*(4)$}
\label{sec:asai}

Let $E/F$ be a quadratic extension of number fields and let $\Gamma=\Gamma_F$ denote 
the absolute Galois group of $F$. In this section we let $G$ 
denote the group $\GSpin^*(4)$, a quasi-split non-split linear algebraic group over $F$, which 
is isomorphic to the split ${\rm GSpin}(4)$ over $E$. The 
$L$-group of $G$ can be written as $^LG = {\rm GSO}(4,\C) \rtimes \Gamma$, where the Galois 
action, which factors through ${\rm Gal}(E/F)$, is described below. We note that ${\rm GSO}(4,\C)$ 
denotes the special orthogonal similitude group; one can identify it as a quotient of 
$\GL(2,\C)\times\GL(2,\C)$ given by 
\[ {\rm GSO}(4,\C) = \beta (\GL(2,\C) \times \GL(2,\C)), \] 
where $\beta$ is the map on the right of the exact sequence 
\[ 
1 \longrightarrow \C^* \longrightarrow
\GL(2,\C)\times\GL(2,\C) \stackrel{\beta}{\longrightarrow} {\rm GO}(4,\C). 
\] 
For details see \cite[\S 2]{ramak-imrn}. Furthermore, the $\Gamma$-action on 
${\rm GSO}(4,\C)$ is as follows. If $\gamma\in\Gamma$ and $g=\beta(g_1,g_2)$ with 
$g_i\in\GL(2,\C)$, then 
\[ \gamma\cdot g = \begin{cases}
\beta(g_1,g_2) & \mbox{if } \gamma\vert_E = 1, \\
\beta(g_2,g_1) & \mbox{if } \gamma\vert_E \not= 1.
\end{cases} \]

We also need to recall the Asai transfer. Consider 
the group $H=\mbox{Res}_{E/F} \GL(2)$ as a group over $F$. Its $L$-group is given by 
\[ {}^LH = \left(\GL(2,\C)\times\GL(2,\C)\right) \rtimes\Gamma, \] 
with the Galois action given by 
\[ \gamma\cdot (g_1,g_2) = \begin{cases}
(g_1,g_2) & \mbox{if } \gamma\vert_E = 1, \\
(g_2,g_1) & \mbox{if } \gamma\vert_E \not= 1.
\end{cases} \]

Let $W$ be a $2$-dimensional ${\mathbb C}$-vector space, 
and let $V = W \otimes W$. 
After fixing a basis for $W$, we identify ${\GL}(W)$ with ${\rm GL}(2,{\mathbb C})$.
Consider the map 
\[ 
\As : \left( \GL(W) \times \GL(W) \right) \rtimes \Gamma 
\longrightarrow \GL(V) \cong \GL(4,\C) 
\] 
given, on pure tensors, by 
\[
\As(g_1,g_2;\gamma) (\xi_1\otimes\xi_2)= \begin{cases}
g_1 \xi_1 \otimes g_2 \xi_2 & \mbox{if } \gamma\vert_E = 1, \\
g_1 \xi_2 \otimes g_2 \xi_1 & \mbox{if } \gamma\vert_E \not= 1,
\end{cases} 
\]
for all $\xi_i \in W$ and all $g_i \in \GL(W)$. It is straightforward to check that this 
map is indeed a homomorphism. It is called the Asai (or `twisted tensor') 
homomorphism. (Alternatively, one could take the map satisfying 
$\As(g_1,g_2;\gamma) (\xi_1\otimes\xi_2) = - g_1 \xi_2 \otimes g_2 \xi_1$ when 
$\gamma\vert_E \not= 1$. This choice would lead to a quadratic twist of 
the above map. ) 

Further, let $\iota : {}^LG \longrightarrow \GL(V) \cong \GL(4,\C)$ be the map 
defined via 
\[  
\iota(\beta(g_1,g_2);\gamma) (\xi_1\otimes\xi_2)= \begin{cases}
g_1 \xi_1 \otimes g_2 \xi_2 & \mbox{if } \gamma\vert_E = 1, \\
g_1 \xi_2 \otimes g_2 \xi_1 & \mbox{if } \gamma\vert_E \not= 1. 
\end{cases} 
\]
Again it is straightforward to check that this map is indeed an $L$-homomorphism. 
It is now clear that  $\iota\circ(\beta,\mbox{id}) = \As$. In other words,  
the following diagram commutes: 

\[ \xymatrix{
\GL(2,\C)\times\GL(2,\C)\rtimes\Gamma  \ar[rd]_{\As}
\ar@{-}[r]  &^{(\beta,\mbox{id})}\ar[r] & {\rm GSO}(4,\C) \rtimes \Gamma
\ar[ld]^\iota  \\
&\GL(4,\C) &
} \]

Assume that $\pi$ is a cuspidal automorphic representation of $\GL(2,\Ad_E)$ and 
let $\Pi=\As(\pi)$ be its Asai transfer to $\GL(4,\Ad_F)$. (See Krishnamurthy \cite{muthu-imrn} or
Ramakrishnan \cite{ramak-imrn}.) 
Then $\Pi = \iota (\beta(\pi))$, 
where $\beta(\pi)$ denotes the transfer of $\pi$ to the group $\GSpin^*(4,\Ad_F)$, 
and $\iota(\beta(\pi))$ denotes the transfer of $\beta(\pi)$ from $\GSpin^*(4,\Ad_F)$ to 
$\GL(4,\Ad_F)$. The transfer corresponding to $\beta$ exists for formal reasons and the existence of 
the transfer corresponding to $\iota$ (for generic representations)
is part of a joint work of the first author with F. Shahidi \cite{asgari-shahidi-duke,asgari-shahidi-preprint}.

\subsection{The Langlands--Shahidi $L$-functions} 
\label{sec:ls-theory}
Let $P = M N$ be a maximal proper parabolic subgroup of a connected reductive quasi-split 
linear algebraic group $G$, 
where $M$ denotes a Levi subgroup and $N$ denotes the unipotent radical of $P$. 
Let $\sigma$ be a {\it generic} automorphic representation of $M(\Ad_F)$. 
Let $r$ denote the adjoint action of the complex Langlands 
dual group $\widehat{M}$ on the Lie algebra of the dual of $N$. Write $r=r_1 \oplus \cdots \oplus r_m$, 
where the $r_i$'s denote the irreducible constituents of $r$ and the ordering is according to the eigenvalue 
of the adjoint action as in, for example, \cite[p.278]{shahidi-90annals}. The Langlands--Shahidi method 
then constructs the $L$-functions $L(s,\sigma,r_i)$ for $1\le i \le m$. 

We need the following cases of the Langlands--Shahidi method. Let $G=\GSpin(2n+6)$ with 
$n=1,2,3$, and consider a maximal parabolic subgroup of $G$ 
with Levi subgroup $M=\GL(n)\times\GSpin(6)$. 
(One could also work with split spin groups as in \cite[\S 3]{kim-jams}; however, we find it 
more convenient to work with the similitude version of the groups.) 
The algebraic group $\GSpin(6)$ is isomorphic to a quotient of $\GL(1)\times {\rm Spin}(6)$ 
by a central subgroup $A=\{1,(-1,c) \}$, where $c$ is the nontrivial element in the center of 
${\rm Spin}(6)$ of order $2$; see \cite[Proposition 2.2]{asgari-shahidi-duke}. 
The algebraic group $\GL(4)$ is isomorphic 
to a quotient of $\GL(1)\times\SL(4)$ by a cyclic central subgroup $B$ of order 4. 
We identify ${\rm Spin}(6)$ with $\SL(4)$ such that $B$ contains $A$. 
This way we get a natural map, defined over $F$, from 
$\GSpin(6)$ to $\GL(4)$, which in turn induces a map 
\begin{equation}
\label{GSpin4-GL4}
f : M \longrightarrow \GL(n)\times\GL(4). 
\end{equation}

Let $\Pi$ be an irreducible cuspidal automorphic representation of $\GL(4,\Ad_F)$ and let $\sigma$ 
be an irreducible cuspidal automorphic representation of $\GL(n,\Ad_F)$, $n=1,2,3$. 
Choose any irreducible constituent $\Sigma$ of $\sigma\otimes\Pi\vert_{f(M(\Ad_F))}$ and let $\Sigma$ also
denote the corresponding representation of $M(\Ad_F)$. The Langlands--Shahidi method then gives 
\begin{equation}
 L(s,\Sigma, r_1) = L(s, \sigma\otimes\Pi, \rho_n \otimes\wedge^2\rho_4),
\end{equation}   
where $\rho_k$ denotes the standard representation of $\GL(k,\C)$, and the $L$-function on the 
right-hand side is a Langlands $L$-function; see \cite[\S 3]{kim-jams}. 
We record a general fact that we need from the Langlands--Shahidi method. 

\begin{proposition}
\label{LSfact}
Let $w_G$ and $w_M$ denote the longest elements of the Weyl group of $G$ and $M$, respectively. 
Let $w_0=w_G w_M$. If $w_0(\Sigma)\not\cong\Sigma$, then $L(s,\Sigma,r_1)$ is entire. 
\end{proposition}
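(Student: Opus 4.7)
The plan is to deduce entirety of $L(s,\Sigma,r_1)$ from the analytic properties of the cuspidal Eisenstein series built out of $\Sigma$ on $G(\Ad_F)$. Form the induced representation $I(s,\Sigma)=\mathrm{Ind}_{P(\Ad_F)}^{G(\Ad_F)}\Sigma\otimes q^{s\langle\tilde{\alpha},H_P(\cdot)\rangle}$, where $\tilde{\alpha}$ is the fundamental weight associated to $P$, and for $\phi$ in the space of $I(s,\Sigma)$ form the Eisenstein series $E(s,\phi,g)$, which by Langlands admits meromorphic continuation to $\C$. Its constant term along $P$ is $\phi+M(s,\Sigma,w_0)\phi$, where $M(s,\Sigma,w_0)\colon I(s,\Sigma)\to I(-s,w_0(\Sigma))$ is the standard intertwining operator attached to $w_0=w_Gw_M$.

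The first key input, due to Langlands and refined by M\oe glin--Waldspurger, is that for cuspidal data on a maximal parabolic the poles of $E(s,\phi,g)$ come from poles of $M(s,\Sigma,w_0)$, and the existence of such poles forces $I(s,\Sigma)$ and $I(-s,w_0(\Sigma))$ to share an irreducible constituent; in particular it forces $w_0(\Sigma)\cong\Sigma$. Thus under the hypothesis $w_0(\Sigma)\not\cong\Sigma$ both $E(s,\phi,g)$ and $M(s,\Sigma,w_0)$ are entire functions on $\C$.

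The second ingredient is Shahidi's key identity, which expresses the intertwining operator, up to a product $N(s,\Sigma,w_0)$ of normalized local intertwining operators that is holomorphic and nonvanishing in the region of interest, as
\[ M(s,\Sigma,w_0)=\prod_{i=1}^{m}\frac{L(is,\Sigma,r_i)}{L(1+is,\Sigma,r_i)}\cdot N(s,\Sigma,w_0). \]
Entirety of $M(s,\Sigma,w_0)$, together with absolute convergence of the denominators $L(1+is,\Sigma,r_i)$ for $\mathrm{Re}(s)>0$ and the functional equation $s\mapsto -s$ relating $\Sigma$ to $w_0(\Sigma)$, then yields entirety of the product $\prod_{i=1}^{m}L(is,\Sigma,r_i)$ on all of $\C$.

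The main obstacle is isolating the single factor $L(s,\Sigma,r_1)$ from the full product when $m>1$. The strategy is to apply the same argument inductively, using ambient Langlands--Shahidi setups on smaller reductive groups in which only the higher $r_j$'s appear as the associated adjoint $L$-functions, so as to establish entirety of each $L(s,\Sigma,r_j)$ for $j>1$ first and then divide it out. For the Levi subgroups $M=\GL(n)\times\GSpin(6)$ inside $G=\GSpin(2n+6)$ with $n=1,2,3$ that we require, one finds $m\leq 2$ and this separation step is carried out explicitly by Kim in \cite[\S 3]{kim-jams}, which is the precise setting we invoke.
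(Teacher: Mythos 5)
Your sketch unrolls the standard Langlands--Shahidi argument that the paper itself does not reproduce but simply attributes to Kim, Proposition 3.4 of \cite{kim-jams}: constant term along $P$, holomorphy of the intertwining operator $M(s,\Sigma,w_0)$ when $w_0(\Sigma)\not\cong\Sigma$, Shahidi's formula expressing $M$ through the quotients $L(is,\Sigma,r_i)/L(1+is,\Sigma,r_i)$, and an inductive (or case-by-case) separation of the $r_1$ factor; so you and the paper are invoking the same machinery and the same reference. One imprecision worth flagging: the claim that a pole of $M(s,\Sigma,w_0)$ at some $s_0>0$ forces $I(s_0,\Sigma)$ and $I(-s_0,w_0(\Sigma))$ to \emph{share an irreducible constituent}, \emph{hence} $w_0(\Sigma)\cong\Sigma$, does not quite parse --- sharing a constituent by itself does not force equivalence of cuspidal inducing data. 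The actual reason, due to Langlands, is that the residue of the Eisenstein series at such a pole is a square-integrable automorphic form; its (well-defined) cuspidal support must lie both in the associate class of $(M,\Sigma)$ and in that of $(M,w_0(\Sigma))$, and for a maximal parabolic the only nontrivial Weyl element preserving $M$ is $w_0$, which is what gives $w_0(\Sigma)\cong\Sigma$. Also, holomorphy and nonvanishing of the normalized local factors $N(s,\Sigma,w_0)$ is not free and is a substantial part of what Kim verifies in \S 3 of his paper; you correctly defer to him for this, but it should not be presented as a formal consequence of the framework.
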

\begin{proof}
This is a standard fact in the Langlands--Shahidi method. For example, see the proof of 
\cite[Proposition 3.4]{kim-jams}. 
\end{proof}

In order to apply the above proposition one needs to know the action of $w_0$ on a representation 
of $M(\Ad_F)$.

\begin{proposition} Let $G=\GSpin(2n+6)$ with $n$ a positive integer and let $\sigma\otimes\Pi$ 
be a representation of $M(\Ad_F)$ as above. Moreover, let $w_0$ be as above and denote its image 
under the map (\ref{GSpin4-GL4}) by $w_0$ again. Then we have 
\label{w0-action} 
\[
w_0(\sigma\otimes\Pi) = \begin{cases}
\w{\sigma}\otimes(\w{\Pi}\otimes\omega_\sigma) & \mbox{ if $n$ is odd,} \\
\w{\sigma}\otimes(\Pi\otimes\omega_\sigma) & \mbox{ if $n$ is even.} \\
\end{cases}
\] 
Here, $\omega_\sigma$ denotes the central character of $\sigma$. 
\end{proposition}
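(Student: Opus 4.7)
My plan is to compute $w_0 = w_G w_M$ explicitly as a Weyl element acting on the maximal torus of $M$ in the standard Bourbaki realization of $G = \GSpin(2n+6)$ as a group of type $D_{n+3}$, and then translate this torus-level action into an action on $\sigma \otimes \Pi$. With orthonormal basis $\{e_1,\dots,e_{n+3}\}$, simple roots $\alpha_i = e_i - e_{i+1}$ for $i = 1,\dots,n+2$ and $\alpha_{n+3} = e_{n+2} + e_{n+3}$, removing $\alpha_n$ yields $M = \GL(n) \times \GSpin(6)$, with $\GL(n)$ living in the span of $e_1,\dots,e_n$ and the ${\rm Spin}(6)$-part in the span of $e_{n+1}, e_{n+2}, e_{n+3}$, together with an extra similitude $\GL(1)$. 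Using the fact that the longest Weyl element of $D_m$ is $-1$ when $m$ is even and is $e_i \mapsto -e_i$ for $i < m$ with $e_m \mapsto e_m$ when $m$ is odd, I compute that $w_0 = w_G w_M$ sends $e_i \mapsto -e_{n+1-i}$ for $1 \le i \le n$, fixes $e_{n+1}$ and $e_{n+2}$, and acts on $e_{n+3}$ by $-1$ when $n$ is odd and by $+1$ when $n$ is even.

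From this I read off the action on $M$. On the $\GL(n)$-factor, the formula $e_i \mapsto -e_{n+1-i}$ is, up to conjugation by a permutation matrix, the involution $g \mapsto (g^T)^{-1}$, so $\sigma \mapsto \w{\sigma}$. On the $\GSpin(6)$-factor, the parity-dependent sign on $e_{n+3}$ is precisely the nontrivial diagram automorphism of $D_3$ that swaps the two forked simple roots $\alpha_{n+2}$ and $\alpha_{n+3}$. Under the fixed identification $\GSpin(6) \cong \GL(4)$, this diagram automorphism corresponds to the outer automorphism $g \mapsto (g^T)^{-1}$ of $\GL(4)$, so on $\Pi$ it produces $\w{\Pi}$ when $n$ is odd and preserves $\Pi$ when $n$ is even, up to a central twist still to be determined.

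It remains to account for the twist by $\omega_\sigma$. The center of $M$ contains both the center of $\GL(n)$ (with cocharacter $e_1 + \cdots + e_n$) and the similitude $\GL(1)$ of $\GSpin(6)$; the action of $w_0$ on the torus of $M$ does not respect the naive direct-sum decomposition of these two pieces, because $M$ sits inside the ambient $\GSpin(2n+6)$ whose cocharacter lattices are intertwined through the similitude. Negating $e_1 + \cdots + e_n$ therefore forces a compensating shift in the similitude cocharacter of $\GSpin(6)$, and tracing this through the description $\GSpin(6) = (\GL(1) \times {\rm Spin}(6))/\langle(-1,c)\rangle$ and the map $f$ of (\ref{GSpin4-GL4}) yields on the $\GL(4)$-side exactly the twist $\Pi \mapsto \Pi \otimes \omega_\sigma$. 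The main obstacle I anticipate is this last step of central-character bookkeeping: navigating simultaneously the quotient defining $\GSpin(6)$ and the identification with $\GL(4)$ while keeping signs and cocharacters straight. The cleanest execution is likely to follow the analogous computation for split ${\rm Spin}$-groups in \cite[\S 3]{kim-jams} and carefully insert the similitude factors.
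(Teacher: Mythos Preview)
Your proposal is correct and follows essentially the same route as the paper: the paper also works in the Bourbaki $D_{n+3}$ realization, computes $w_0 = w_G w_M$ on the root/weight level, and reads off that $w_0$ induces the nontrivial Dynkin automorphism on the $A_{n-1}$-factor always and on the $A_3$-factor exactly when $n$ is odd. The only difference is in the handling of the $\omega_\sigma$ twist: rather than phrasing it as a ``compensating shift in the similitude cocharacter of $\GSpin(6)$'', the paper writes down the explicit conjugation formula $w_0\, m(g,h)\, w_0^{-1} = m({}^t g^{-1}\,\nu(m),\, h^*)$, so the similitude factor $\nu(m)=\nu(h)$ appears as a scalar twist on the $\GL(n)$-side; applying $\sigma$ then pulls out $\omega_\sigma(\nu(h))$, which is immediately reabsorbed as the character $\omega_\sigma\circ\nu$ on the $\GSpin(6)$-side. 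This is the same bookkeeping you describe, just viewed from the $\GL(n)$-factor rather than the $\GSpin(6)$-factor, and it avoids having to unwind the quotient description of $\GSpin(6)$.
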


\begin{proof}
Recall that the nontrivial automorphism of the Dynkin diagram of type $A_m$ corresponds to 
an outer automorphism of ${\rm GL}(m+1)$ and it conjugates an irreducible representation to its dual representation. The proof of the proposition will follow from a description of 
how $w_0$ acts on the root system of type $D_r$.  

We use the Bourbaki notation for the simple roots: 
\[ \alpha_1=\epsilon_1 - \epsilon_2,\cdots, \alpha_{r-2}=\epsilon_{r-2} - \epsilon_{r-1}, 
\alpha_{r-1}=\epsilon_{r-1} - \epsilon_{r},\alpha_{r}=\epsilon_{r-1} + \epsilon_{r}. \] 
The Weyl group of $G$ is isomorphic with $\{\pm 1\}^{r-1} \rtimes \mathcal S_r$ which we 
identify with a subgroup of signed $r\times r$ permutation matrices acting on 
$\rl^r=\rl\epsilon_1\oplus\cdots\oplus\rl\epsilon_r$. With this identification observe that 
\[ w_G = \begin{cases} 
-I_r & \mbox{if $r$ is even,} \\ 
\\
\left( \begin{matrix} -I_{r-1}&\\ & 1\end{matrix} \right) & \mbox{if $r$ is odd.} 
\end{cases} \]

Let $r=n+3$ and let $M$ be a maximal Levi subgroup of type $A_{n-1}\times A_3$ in $G = {\rm GSpin}(2r)$. 
The simple roots in the $A_3$-factor are $\alpha_{r-1},\alpha_{r-2},\alpha_r$. 
The proposition follows by observing that 
\[ w_0(\alpha_{r-1}) = w_G w_M(\alpha_{r-1}) = w_G(-\alpha_{r}) = w_G(-\epsilon_{r-1}-\epsilon_r) 
=\begin{cases} 
\epsilon_{r-1} + \epsilon_{r} = \alpha_{r} & \mbox{if $r$ is even,} \\ 
\epsilon_{r-1} - \epsilon_{r} = \alpha_{r-1} & \mbox{if $r$ is odd,} 
\end{cases} \]
and  
\[ w_0(\alpha_{r}) = w_G w_M(\alpha_{r}) = w_G(-\alpha_{r-1}) = w_G(-\epsilon_{r-1}+\epsilon_r) 
=\begin{cases} 
\epsilon_{r-1} - \epsilon_{r} = \alpha_{r-1} & \mbox{if $r$ is even,} \\ 
\epsilon_{r-1} + \epsilon_{r} = \alpha_{r} & \mbox{if $r$ is odd,} 
\end{cases} \]
while 
\[ w_0(\alpha_{r-2}) = w_G w_M(\alpha_{r-2}) = w_G(-\alpha_{r-2}) = w_G(-\epsilon_{r-2}-\epsilon_{r-1}) 
=\alpha_{r-2} \] 
in either case. Moreover, for $1 \le j \le n-1$ we have 
\[ w_0(\alpha_{j}) = w_G w_M(\alpha_{j}) = w_G(-\alpha_{n-j}) = w_G(-\epsilon_{n-j}+\epsilon_{n-j+1}) 
=\alpha_{n-j}. \] 
This means that $w_0$ induces the nontrivial automorphism of the Dynkin diagram of the $A_{n-1}$-factor of $M$,
and on the $A_3$-factor it induces the nontrivial automorphism of the Dynkin diagram if and only if $r$ is even.

Let $m=m(g,h)$ be an arbitrary element in the Levi subgroup $M$ identified with $\GL(n)\times\GSpin(6)$, 
in $G=\GSpin(2n+6)$, where $g\in\GL(n)$ and $h\in\GSpin(6)$, and let $\nu=\nu(m)$ denote its similitude 
character value. Then 
\[ w_0 m(g,h) w_0^{-1} = m( {^tg}^{-1} \nu(m), h^*), \] 
where 
\[ h^*=\begin{cases}
{^th}^{-1} & \mbox{if $r$ is even,} \\ 
h & \mbox{if $r$ is odd.} \end{cases} 
\] 
We conclude that 
\begin{eqnarray*} 
w_0 (\sigma\otimes\Pi) (m(g,h)) &=& (\sigma\otimes\Pi)(m({^tg}^{-1} \nu(m),  h^*)) \\ 
&=& \sigma(^tg^{-1}) \Pi(h^*) \omega_\sigma(\nu(m)) \\
&=& \w{\sigma}(g) \Pi^*(h) \omega_\sigma(\nu(h)) \\
&=& \left(\w{\sigma}\otimes(\Pi^*\otimes\omega_\sigma)\right) (m(g,h)), 
\end{eqnarray*} 
where 
\[ \Pi^*= \begin{cases}
\w{\Pi} & \mbox{if $r$ is even,} \\ 
\Pi & \mbox{if $r$ is odd.} \end{cases} 
\] 
This completes the proof. Note that $r=n+3$ is even if and only if $n$ is odd.
\end{proof}

\section{The proof of {\bf(ii)}$\Rightarrow${\bf(i)}}
\label{sec:2-implies-1}

We verify that for each of {\bf (ii)(a)} through {\bf (ii)(d)} the exterior square 
transfer $\wedge^2 \Pi$ is not cuspidal. Indeed, it is not difficult to write 
down the isobaric decomposition for $\wedge^2 \Pi$ in each case.

\subsection{{\bf(ii)}(a)$\Rightarrow${\bf(i)}}

\begin{proposition}
\label{prop:gl2xgl2}
Let $\pi_1$ and $\pi_2$ be cuspidal automorphic representations of ${\rm GL}(2,\Ad_F)$.
Let $\pi_1\boxtimes \pi_2$ be the transfer to an automorphic representation of 
${\rm GL}(4,\Ad_F)$, whose existence was established in \cite{ramak-annals}. For brevity, we let 
$\Pi = \pi_1 \boxtimes \pi_2$ and $\omega = \omega_{\pi_1}\omega_{\pi_2}$. 
We have
\begin{itemize}
\item[(a)]
$\wedge^2(\pi_1 \boxtimes \pi_2) = 
\left( {\rm Sym}^2(\pi_1) \otimes \omega_{\pi_2} \right) \boxplus 
\left( {\rm Sym}^2(\pi_2) \otimes \omega_{\pi_1} \right).$
\item[(b)] Assuming Langlands functoriality one should expect
$${\rm Sym}^2(\pi_1 \boxtimes \pi_2) = 
\left( {\rm Sym}^2(\pi_1) \boxtimes {\rm Sym}^2(\pi_2) \right) \boxplus 
\omega_{\pi_1} \omega_{\pi_2}.$$
\item[(c)] The partial L-function $L^S(s,\wedge^2(\Pi)\otimes \omega^{-1})$ is entire while 
the partial L-function $L^S(s, \Pi, {\rm Sym}^2 \otimes \omega^{-1})$ has a pole at $s=1$.
\end{itemize}
\end{proposition}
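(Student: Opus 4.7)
The plan is to handle (a) and (b) by matching Satake parameters at unramified places and invoking strong multiplicity one (Theorem~\ref{thm:isobaric}); part (c) will then follow from (a) combined with the standard factorization $L(s,\Pi \times \Pi) = L(s, \Pi, {\rm Sym}^2)\cdot L(s, \Pi, \wedge^2)$ and the pole criterion of Jacquet--Piatetski-Shapiro--Shalika (Theorem~\ref{thm:rankin-selberg}).

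For (a), fix an unramified place $v$ where $\pi_{1,v}$ and $\pi_{2,v}$ have Satake parameters $\{a_1,a_2\}$ and $\{b_1,b_2\}$, so the Ramakrishnan transfer $\Pi_v$ has the four parameters $\{a_i b_j\}$. A direct expansion of the six pairwise products comprising the parameters of $\wedge^2 \Pi_v$, compared with the parameters of $({\rm Sym}^2\pi_{1,v})\otimes\omega_{\pi_2,v}$ together with $({\rm Sym}^2\pi_{2,v})\otimes\omega_{\pi_1,v}$ (via Gelbart--Jacquet), shows the two multisets of six parameters coincide. Both sides are isobaric (the left by Kim, the right because ${\rm Sym}^2$ of a cuspidal $\GL(2)$ representation is isobaric on $\GL(3)$ by Gelbart--Jacquet), so Theorem~\ref{thm:isobaric} yields (a). For (b), one repeats the same Satake matching for the ten parameters of ${\rm Sym}^2 \Pi_v$; the statement is conjectural solely because a symmetric square functorial lift ${\rm Sym}^2 : \GL(4)\to\GL(10)$ is not presently known.

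For (c), applying (a) factors
\[ L^S(s, \wedge^2\Pi \otimes \omega^{-1}) = L^S(s, {\rm Sym}^2\pi_1 \otimes \omega_{\pi_1}^{-1}) \cdot L^S(s, {\rm Sym}^2\pi_2 \otimes \omega_{\pi_2}^{-1}) \]
after using the identity $\omega_{\pi_j}\omega^{-1} = \omega_{\pi_i}^{-1}$ for $\{i,j\}=\{1,2\}$. Each factor is the standard $\GL(3)$ $L$-function of an isobaric representation, and is entire by Theorem~\ref{thm:rankin-selberg} provided the trivial character does not appear as a summand. When $\pi_i$ is non-dihedral, ${\rm Sym}^2\pi_i$ is cuspidal on $\GL(3)$ and there is nothing to check. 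When $\pi_i = {\rm I}_E^F(\chi)$ is dihedral, the classical decomposition ${\rm Sym}^2(\pi_i) = {\rm I}_E^F(\chi^2) \boxplus (\omega_{\pi_i}\eta_{E/F})$ leaves the nontrivial character $\eta_{E/F}$ as the $\GL(1)$ summand after twisting by $\omega_{\pi_i}^{-1}$, while a short idele-class calculation shows that a trivial summand in ${\rm I}_E^F(\chi^2)\otimes\omega_{\pi_i}^{-1}$ would force $\chi = \chi^\sigma$, contradicting cuspidality of $\pi_i$. Having established entirety of $L^S(s, \wedge^2\Pi\otimes\omega^{-1})$, for the second assertion I use $\w{\Pi} \cong \Pi\otimes\omega^{-1}$ (immediate from $\w{\pi_i} = \pi_i\otimes\omega_{\pi_i}^{-1}$ and the behavior of $\boxtimes$ under character twists) to rewrite
\[ L^S(s, \Pi\times\w{\Pi}) = L^S(s, \Pi, {\rm Sym}^2 \otimes \omega^{-1})\cdot L^S(s, \Pi, \wedge^2\otimes\omega^{-1}); \]
the left side has a simple pole at $s=1$ by Theorem~\ref{thm:rankin-selberg} and the wedge factor on the right is entire by what we just showed, so the symmetric square factor inherits the simple pole. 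The main technical obstacle is the entirety claim in the dihedral subcase, which requires tracking central characters carefully but ultimately reduces to the tautology $\chi \neq \chi^\sigma$ for cuspidal ${\rm I}_E^F(\chi)$.
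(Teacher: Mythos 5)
Your proposal is correct and follows essentially the same route as the paper: (a) and (b) by matching Satake parameters and invoking strong multiplicity one for isobaric representations, and (c) by factoring $L^S(s,\Pi\times\Pi^\vee)$ through the twisted exterior and symmetric square $L$-functions, noting $\wedge^2\Pi\otimes\omega^{-1}={\rm Ad}(\pi_1)\boxplus{\rm Ad}(\pi_2)$ and verifying holomorphy at $s=1$ in both the non-dihedral case (cuspidality of ${\rm Ad}(\pi_i)$ via Gelbart--Jacquet) and the dihedral case (${\rm Ad}(\pi_i)=\omega_{E/F}\boxplus{\rm I}_E^F(\chi'\chi^{-1})$ with $\chi'\neq\chi$). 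The only cosmetic difference is that you phrase the dihedral decomposition in terms of ${\rm Sym}^2\pi_i$ and then twist, whereas the paper records the adjoint decomposition directly, but this is the same computation.
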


\begin{proof}
The proof of (a) and (b), using Theorem~\ref{thm:isobaric}, is an easy calculation 
using Satake parameters on both sides. 
More precisely, for a finite place $v$ at which both $\pi_1$ and $\pi_2$ are unramified we let 
$\pi_{1,v}$ and $\pi_{2,v}$ have Frobenius-Hecke eigenvalues 
$t_1={\rm diag}(a_1,b_1)$ and $t_2={\rm diag}(a_2,b_2)$, respectively. Then  
\[ 
\wedge^2 (t_1 \otimes t_2) = \left( {\rm diag}(a_1^2,a_1b_1,b_1^2) \cdot a_2b_2 \right) \boxplus 
\left({\rm diag}(a_2^2,a_2b_2,b_2^2) \cdot a_1b_1\right)
\]
and 
\[
{\rm Sym^2}(t_1 \otimes t_2) = \left( {\rm diag}(a_1^2,a_1b_1,b_1^2) \otimes 
{\rm diag}(a_2^2,a_2b_2,b_2^2) \right) \boxplus \left( a_1b_1 \cdot a_2b_2 \right). 
\]
Part (a) has also been observed by others; see \cite[(7.27)]{muthu-imrn} and 
\cite[(2.6)]{ramak-shahidi}, for example. For (b) to make sense one has to assume 
the symmetric square transfer from ${\rm GL}(4)$ to ${\rm GL}(10)$ and the automorphic tensor product 
from ${\rm GL}(3) \times {\rm GL}(3)$ to ${\rm GL}(9)$, both particular instances of functoriality.

To prove (c), observe that $\Pi^{\vee} \cong \Pi \otimes \omega^{-1}$, which implies 
\[
L^S(s, \Pi \times \Pi^{\vee}) = L^S(s,\wedge^2(\Pi)\otimes \omega^{-1})
L^S(s, \Pi, {\rm Sym}^2 \otimes \omega^{-1}), 
\]
where $S$ is a finite set of places including all the archimedean ones such 
that $\Pi$ is unramified outside of $S$. 
From (a) we have $\wedge^2(\Pi)\otimes \omega^{-1} = {\rm Ad}(\pi_1) \boxplus {\rm Ad}(\pi_2)$. 
(Here ${\rm Ad}(\pi_i) = {\rm Sym}^2(\pi_i)\otimes \omega_{\pi_i}^{-1}$.) If $\pi_i$ is not dihedral, 
then ${\rm Ad}(\pi_i)$ is cuspidal (by Gelbart-Jacquet \cite{gelbart-jacquet})
and hence its partial $L$-function is entire. If $\pi_i$ is dihedral, 
say $\pi_i = I_E^F(\chi)$, then it is easy to see that 
${\rm Ad}(\pi_i) = \omega_{E/F} \boxplus I_E^F(\chi'\chi^{-1})$, where $\omega_{E/F}$ is the 
quadratic character of $F$ associated to $E$ by class field theory, and $\chi'$ is the nontrivial 
${\rm Gal}(E/F)$-conjugate of $\chi$. Since $\pi_i$ is cuspidal, the inducing character $\chi$ is 
Galois regular, i.e., $\chi' \neq \chi$, or equivalently $\chi'\chi^{-1}$ is a nontrivial character, 
whence $L^S(s,{\rm Ad}(\pi_i)) = L^S(s,\omega_{E/F})L^S(s,I_E^F(\chi'\chi^{-1}))$ is entire. 
(In particular, it does not have a pole at $s=1$.) Therefore 
$L^S(s,\wedge^2(\Pi)\otimes \omega^{-1}) = L^S(s,{\rm Ad}(\pi_1))L^S(s,{\rm Ad}(\pi_2))$ 
does not have a pole at $s=1$. However, $L^S(s, \Pi \times \Pi^{\vee})$ has a pole at $s=1$, 
which implies that $L^S(s, \Pi, {\rm Sym}^2 \otimes \omega^{-1})$ has a pole at $s=1$.

Note that (c), unlike (b), is unconditional and does not depend on assuming unproven 
instances of functoriality. 
\end{proof}

\subsection{{\bf(ii)(b)}$\Rightarrow${\bf(i)}}

\begin{proposition}
\label{prop:asai}
Let $E/F$ be a quadratic extension. Let $\pi$ be a cuspidal automorphic representation of 
$\GL(2,\Ad_E)$ and let $\Pi=\As(\pi)$ be its Asai transfer. Assume that $\Pi$ is a cuspidal 
automorphic representation of $\GL(4,\Ad_F)$. Then $\wedge^2\Pi$ is 
cuspidal if and only if $\pi$ is not dihedral. 
\end{proposition}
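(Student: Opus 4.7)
The plan is to identify $\wedge^2\Pi$ with an automorphic induction from $\GL(3,\Ad_E)$, and then to read off the cuspidality criterion from the behavior of ${\rm Sym}^2(\pi)$. The key identity I would establish first is
\[
  \wedge^2 \As(\pi) \;\cong\; {\rm I}_E^F\!\left({\rm Sym}^2(\pi)\otimes\omega_\pi^\sigma\right),
\]
where $\sigma$ denotes the nontrivial element of ${\rm Gal}(E/F)$. The motivation comes from the dual side: for a two-dimensional representation $\phi$ of $W_E$ we have $(\As\phi)|_{W_E}\cong\phi\otimes\phi^\sigma$, and the standard identity $\wedge^2(V_1\otimes V_2) = {\rm Sym}^2 V_1\otimes\det V_2 \;\oplus\; \det V_1\otimes{\rm Sym}^2 V_2$ writes $(\wedge^2\As\phi)|_{W_E}$ as the direct sum of ${\rm Sym}^2\phi\otimes\omega_\phi^\sigma$ with its $\sigma$-conjugate; since these summands are swapped by a representative of the nontrivial coset, the whole is the ordinary induction of the first summand. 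To convert this into an automorphic statement I would compare Satake parameters at every unramified place of $F$: at a split place $v=w_1w_2$ the computation is immediate from the diagonal form, and at an inert unramified place the tensor-induction formula shows that if $\pi_v$ has Satake parameter ${\rm diag}(\alpha,\beta)$ then $\As(\pi)_v$ has Satake parameter ${\rm diag}(\alpha,\beta,\sqrt{\alpha\beta},-\sqrt{\alpha\beta})$, whose exterior square is readily checked to match the Satake parameter of ${\rm I}_E^F({\rm Sym}^2(\pi)\otimes\omega_\pi^\sigma)_v$. Theorem~\ref{thm:isobaric} then yields the desired isobaric identity.

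Granted the identity, the dihedral case is immediate: by Gelbart--Jacquet, ${\rm Sym}^2(\pi)$ is non-cuspidal when $\pi$ is dihedral, so ${\rm I}_E^F({\rm Sym}^2(\pi)\otimes\omega_\pi^\sigma)$ is a proper isobaric sum and hence $\wedge^2\Pi$ is not cuspidal.

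Assume now that $\pi$ is not dihedral, so that $\tau := {\rm Sym}^2(\pi)\otimes\omega_\pi^\sigma$ is cuspidal on $\GL(3,\Ad_E)$. Since $E/F$ is cyclic, ${\rm I}_E^F(\tau)$ is cuspidal if and only if $\tau^\sigma\not\cong\tau$. An isomorphism $\tau^\sigma\cong\tau$ unwinds to ${\rm Ad}(\pi)\cong{\rm Ad}(\pi^\sigma)$, and since ${\rm Ad}(\pi)$ is cuspidal the standard rigidity for $\GL(2)$ forces $\pi^\sigma\cong\pi\otimes\chi$ for some Hecke character $\chi$ of $E$. Applying $\sigma$ twice yields $\chi\chi^\sigma=1$, so Hilbert~90 produces $\mu$ with $\chi=\mu/\mu^\sigma$, whence $\pi\otimes\mu$ is Galois invariant and descends: $\pi\otimes\mu = {\rm BC}_{E/F}(\pi_0)$ for some cuspidal $\pi_0$ on $\GL(2,\Ad_F)$. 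A further Satake-parameter computation (equivalently, the identity $\As({\rm BC}(\pi_0))\cong{\rm Sym}^2(\pi_0)\boxplus(\omega_{\pi_0}\omega_{E/F})$ combined with the transformation of $\As$ under character twists) then exhibits $\As(\pi)$ as a proper isobaric sum, contradicting cuspidality of $\Pi$. Hence $\tau\not\cong\tau^\sigma$ and $\wedge^2\Pi$ is cuspidal.

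The main obstacle I anticipate is step one: the bookkeeping at inert unramified places, where one must be careful about signs and square roots and ensure that the character twists on both sides agree. The remaining steps reduce to the Gelbart--Jacquet theorem, the cuspidality criterion for cyclic automorphic induction, and a brief application of Hilbert~90.
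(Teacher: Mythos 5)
Your proposal is correct and follows the same overall strategy as the paper: establish the isobaric identity $\wedge^2\As(\pi) \cong {\rm I}_E^F\bigl({\rm Sym}^2(\pi)\otimes\omega_\pi^\sigma\bigr)$, dispose of the dihedral case immediately via Gelbart--Jacquet, and in the nondihedral case reduce cuspidality of the cyclic induction to the condition $\tau^\sigma\not\cong\tau$, which unwinds via ${\rm Ad}(\pi)\cong{\rm Ad}(\pi^\sigma)$ and rigidity to $\pi^\sigma\cong\pi\otimes\chi$. The one genuine difference is in how the two arguments finish: the paper simply invokes Ramakrishnan's cuspidality criterion for the Asai transfer (Theorem~1.4 of \cite{ramak-imrn}), which states directly that $\As(\pi)$ is cuspidal if and only if $\pi^\sigma$ is not a twist of $\pi$, while you re-derive the relevant direction of that criterion from scratch via Hilbert~90, base-change descent, and the decomposition of $\As({\rm BC}_{E/F}(\pi_0))$. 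Similarly, the paper cites Krishnamurthy~\cite[\S7]{muthu-imrn} for the key exterior-square identity, whereas you propose to verify it by a Satake-parameter calculation at split and inert places (your inert-place eigenvalue list ${\rm diag}(\alpha,\beta,\sqrt{\alpha\beta},-\sqrt{\alpha\beta})$ is correct, and its exterior square does match the induction as you claim). Your route is longer but more self-contained; the paper's is terser but leans on two external results. Both are valid.
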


\begin{proof}
The proof depends on the following identity
$$
\wedge^2(\As(\pi)) = {\rm I}_F^E ({\rm Sym}^2 \pi \otimes \omega_\pi'), 
$$
where $'$ means the nontrivial ${\rm Gal}(E/F)$-conjugate. (See Krishnamurthy \cite[\S7]{muthu-imrn}.)
To begin, assume that $\pi$ is not dihedral. 
By Ramakrishnan \cite[Theorem 1.4]{ramak-imrn} we know that 
$\As(\pi)$ is cuspidal if and only if $\pi'\not\cong\pi\otimes\mu$ 
for any $\mu$. If $\wedge^2\Pi$ is not cuspidal, then 
$$
\left( {\rm Sym}^2 \pi \otimes\omega_\pi'\right)' \cong {\rm Sym}^2 \pi \otimes\omega_\pi'. 
$$ 
This implies that ${\rm Sym}^2 \pi' \otimes\omega_\pi \cong {\rm Sym}^2 \pi \otimes\omega_\pi'$, i.e., 
${\rm Ad}(\pi)\cong {\rm Ad}(\pi')$. This, in turn, implies that $\pi'\cong\pi\otimes\mu$
(by Ramakrishnan \cite[Theorem 4.1.2]{ramak-annals}), contradicting
the fact that there is no such twist. Hence $\wedge^2\Pi$ is cuspidal.

Next, assume that $\pi$ is dihedral. In this case ${\rm Sym}^2(\pi)$ is not cuspidal and therefore, 
${\rm I}_F^E ({\rm Sym}^2 \pi \otimes \omega_\pi')$ cannot possibly be cuspidal.
\end{proof}

\subsection{{\bf(ii)(c)}$\Rightarrow${\bf(i)}}  

\begin{proposition}
\label{prop:gsp4togl4} 
Let $\Pi$ be a cuspidal automorphic representation of $\GL(4,\Ad_F)$ and 
assume that $\Pi$ is a transfer from a cuspidal (generic) automorphic representation 
$\pi$ of ${\rm GSp}(4,\Ad_F)$. Then 
\[ \wedge^2 \Pi = \tilde{r}_5(\pi) \boxplus \omega_\pi, \]
where $\tilde{r}_5$ is a degree $5$ representation of $\GSp(4,\C)$ defined below. 
In particular, $\wedge^2 \Pi$ is not cuspidal. 
\end{proposition}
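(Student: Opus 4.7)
The plan is to prove the claimed isobaric decomposition by matching Satake parameters at almost all places and then invoking strong multiplicity one for isobaric representations (Theorem \ref{thm:isobaric}). The transfer from $\GSp(4)$ to $\GL(4)$ is the functorial lift attached to the natural embedding $\iota : \GSp(4,\C) \hookrightarrow \GL(4,\C)$ of dual groups; concretely, at any finite place $v$ where $\pi$ and $\Pi$ are both unramified, the Satake parameter of $\Pi_v$ is $\iota(t_v)$ for a semisimple class $t_v \in \GSp(4,\C)$ representing $\pi_v$.

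The key algebraic input is the branching rule for $\wedge^2$ along $\iota$. I would observe that the symplectic form $J$ preserved by $\GSp(4,\C)$ spans a line in $\wedge^2 \C^4$ on which $g \in \GSp(4,\C)$ acts by the similitude character $\mu(g)$; choosing a complement and using complete reducibility of the reductive group $\GSp(4,\C)$ gives a decomposition
\[
\wedge^2 \rho_4 \big|_{\GSp(4,\C)} \;=\; \tilde r_5 \;\oplus\; \mu,
\]
where $\tilde r_5$ is a $5$-dimensional representation (it is the representation whose restriction to $\SSp(4,\C)$ becomes, via the exceptional isogeny $\SSp(4,\C) \twoheadrightarrow \SO(5,\C)$, the standard $5$-dimensional representation, suitably twisted so that it extends to $\GSp(4,\C)$). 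Since $\omega_{\pi,v}$ corresponds under the Satake isomorphism to $\mu(t_v)$, this identity of characters of $\GSp(4,\C)$ translates into the local identity
\[
\wedge^2(\Pi_v) \;\cong\; \tilde r_5(\pi_v) \;\boxplus\; \omega_{\pi,v}
\]
at every unramified $v$.

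To conclude globally, I would form the candidate isobaric representation $\Sigma \;=\; \tilde r_5(\pi) \boxplus \omega_\pi$ of $\GL(6,\Ad_F)$, where $\tilde r_5(\pi)$ is the (isobaric) transfer of $\pi$ associated to the representation $\tilde r_5$ of the dual group (or, for the purposes of this proposition, the isobaric representation whose Satake parameters are $\tilde r_5(t_v)$ at unramified $v$); the local identity above shows $\Sigma_v \cong (\wedge^2 \Pi)_v$ at all places outside a finite set, so Theorem \ref{thm:isobaric} gives $\wedge^2 \Pi \cong \Sigma$. The presence of the character summand $\omega_\pi$ already precludes cuspidality. The only real obstacle is checking the branching $\wedge^2 \rho_4|_{\GSp(4,\C)} = \tilde r_5 \oplus \mu$ cleanly, and giving an unconditional meaning to $\tilde r_5(\pi)$ as an isobaric automorphic representation of $\GL(5,\Ad_F)$; the former is elementary linear algebra with the symplectic form, and the latter is available by the generic transfer from $\GSp(4)$ to $\GL(5)$ (equivalently, the transfer for $\SO(5)$) of Cogdell--Kim--Piatetski-Shapiro--Shahidi, which I would cite at this point.
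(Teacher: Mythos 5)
Your branching identity $\wedge^2\rho_4\big|_{\GSp(4,\C)} = \tilde r_5 \oplus \mu$ is exactly the algebraic core of the paper's proof (the paper quotes it from Kim, \emph{Trans.~Amer.~Math.~Soc.}~354 (2002)), and comparing Satake parameters at unramified places and invoking Theorem~\ref{thm:isobaric} is the intended mechanism, so the argument is essentially the paper's. The one place you diverge is in how to make $\tilde r_5(\pi)$ an automorphic representation of $\GL(5,\Ad_F)$. You propose to import it from the generic transfer for classical groups, but the citation is not quite right: the Cogdell--Kim--Piatetski-Shapiro--Shahidi transfer attached to $\SO(5)$ lands in $\GL(4)$ (via $\SSp(4,\C)\hookrightarrow\GL(4,\C)$), and the $\SSp(4)\to\GL(5)$ transfer in their work is for the group $\SSp(4)$, not $\GSp(4)$; none of these directly produces the degree~$5$ transfer of a $\GSp(4)$-representation without first dealing with the similitude factor. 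The paper avoids this by going the other way: it already has $\Pi$ from the Asgari--Shahidi generic transfer $\GSp(4)\to\GL(4)$ and $\wedge^2\Pi$ from Kim's $\GL(4)\to\GL(6)$ lift, and it \emph{defines} $\tilde r_5(\pi)$ to be the isobaric complement of $\omega_\pi=\nu(\pi)$ inside $\wedge^2\Pi$, which exists because the Satake computation shows that $L^S(s,\wedge^2\Pi\otimes\omega_\pi^{-1})$ has a pole at $s=1$, forcing $\omega_\pi$ to be an isobaric summand. This route is self-contained given the inputs the paper is already using, whereas yours would need a careful passage from similitude to non-similitude groups to invoke CKPS correctly. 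Otherwise the proposal is sound and matches the paper's strategy.
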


\begin{proof}
We use Kim \cite[p. 2793]{kim-tams}. As observed there, one has 
\[ \SSp(4,\C) \overset{\iota}{\hookrightarrow} \GL(4,\C) \overset{\wedge^2}{\longrightarrow} 
\GL(6,\C) \] 
and $\wedge^2\circ \iota = r_5 \oplus 1\!\!1$ decomposes into a direct sum of the trivial representation 
and a five-dimensional representation $r_5$. Similarly, 
\[ \GSp(4,\C) \overset{\tilde{\iota}}{\hookrightarrow} \GL(4,\C) \overset{\wedge^2}{\longrightarrow} 
\GL(6,\C) \] 
and $\wedge^2\circ \tilde{\iota} = \tilde{r}_5 \oplus \nu$, where $\nu$ is the similitude 
character of $\GSp(4,\C)$ and $\tilde{r}_5$ is a five-dimensional representation of $\GSp(4,\C)$. 
This implies the desired equality of automorphic representations. 
\end{proof}

\begin{rem} Embedded in the above proof is the assertion that a cuspidal (generic) 
automorphic representation $\pi$ of $\GSp(4,\Ad_F)$ admits a transfer to an automorphic representation $\tilde{r}_5(\pi)$ of 
$\GL_5(\Ad_F)$ corresponding to the representation $\tilde{r}_5$. This depends on
the generic transfer from ${\rm GSp}(4)$ to ${\rm GL}(4)$ (see Asgari-Shahidi \cite{asgari-shahidi-comp}),
and the exterior square transfer from ${\rm GL}(4)$ to ${\rm GL}(6)$ due to Kim \cite{kim-jams}.
\end{rem}

\subsection{{\bf(ii)}(d)$\Rightarrow${\bf(i)}}

\begin{proposition}
\label{prop:gl2-quadratic}
Let $\pi$ be a cuspidal automorphic representation of ${\rm GL}(2,\Ad_E)$, where 
$E/F$ is a quadratic extension, and let $\Pi = {\rm I}_F^E(\pi)$ be the 
automorphic induction of $\pi$ to an automorphic representation of ${\rm GL}(4,\Ad_F)$. 
Then $\wedge^2 \Pi$ is not cuspidal. 
\end{proposition}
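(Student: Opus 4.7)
The plan is to establish the explicit isobaric decomposition
\[
\wedge^2 \Pi \;=\; {\rm I}_F^E(\omega_\pi) \boxplus \As(\pi),
\]
where $\omega_\pi$ is the central character of $\pi$, viewed as a Hecke character of $E$, and $\As(\pi)$ is the Asai transfer of $\pi$ (see Section~\ref{sec:asai}). Since ${\rm I}_F^E(\omega_\pi)$ is a two-dimensional isobaric automorphic representation of $\GL(2,\Ad_F)$, its appearance as a proper summand of the six-dimensional $\wedge^2\Pi$ immediately yields non-cuspidality.

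To establish the identity I would invoke the strong multiplicity one theorem (Theorem~\ref{thm:isobaric}) and verify agreement of Satake parameters at almost all finite places $v$ of $F$. At a place $v$ that splits in $E$ as $w_1 w_2$, if $\pi_{w_i}$ has Satake parameter $\mathrm{diag}(\alpha_i,\beta_i)$, then $\Pi_v$ has parameter $\mathrm{diag}(\alpha_1,\beta_1,\alpha_2,\beta_2)$, whose exterior square has the six eigenvalues $\alpha_1\beta_1,\,\alpha_2\beta_2,\,\alpha_1\alpha_2,\,\alpha_1\beta_2,\,\beta_1\alpha_2,\,\beta_1\beta_2$. The first two match the Satake parameter of ${\rm I}_F^E(\omega_\pi)_v$, while the remaining four are precisely that of $\As(\pi)_v$ as described in Section~\ref{sec:asai}. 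At a finite place $v$ inert in $E$ with unique place $w$ above, the restriction of the $L$-parameter of $\Pi_v$ to $W_{E_w}$ is $\psi \oplus \psi^\sigma$, where $\psi$ denotes the parameter of $\pi_w$ and $\sigma$ generates $\mathrm{Gal}(E_w/F_v)$. The standard decomposition
\[
\wedge^2(\psi \oplus \psi^\sigma) \;=\; \wedge^2\psi \;\oplus\; \wedge^2\psi^\sigma \;\oplus\; (\psi \otimes \psi^\sigma)
\]
splits $\wedge^2\Pi_v|_{W_{E_w}}$ into a $\sigma$-permuted pair, whose extension to $W_{F_v}$ is ${\rm I}_F^E(\omega_\pi)_v$ (since $\wedge^2\psi = \det\psi$ corresponds to $\omega_\pi$), together with the $\sigma$-invariant piece $\psi \otimes \psi^\sigma$, whose extension via the tensor-swap action is $\As(\pi)_v$ by the very definition of the Asai representation.

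With these local matches at almost all places, Theorem~\ref{thm:isobaric} delivers the claimed global isobaric identity, and the presence of the proper two-dimensional summand ${\rm I}_F^E(\omega_\pi)$ finishes the argument. The main obstacle I foresee is the careful bookkeeping of the extension of the $\sigma$-invariant representation $\psi \otimes \psi^\sigma$ from $W_{E_w}$ to $W_{F_v}$, so as to land on the Asai transfer itself rather than its quadratic twist by $\omega_{E/F}$; this, however, is a formal verification that matches the normalization of $\As$ fixed in Section~\ref{sec:asai}.
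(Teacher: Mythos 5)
Your approach is the same as the paper's: exhibit an explicit isobaric decomposition of $\wedge^2\Pi$ containing a degree-two summand ${\rm I}_E^F(\omega_\pi)$, which immediately forces non-cuspidality. The paper simply records the identity
\[
\wedge^2\bigl({\rm I}_F^E(\pi)\bigr)\;=\;{\rm As}(\pi)\otimes\omega_{E/F}\;\boxplus\;{\rm I}_E^F(\omega_\pi)
\]
as known, citing Kim's CRM Lecture Notes article, whereas you sketch a verification by matching Satake parameters at split and inert places and then invoke strong multiplicity one (Theorem~\ref{thm:isobaric}); that self-contained route is fine and, if anything, more informative. The one discrepancy is exactly the point you flag at the end: with the normalization of $\As$ fixed in Section~\ref{sec:asai} (the ``$+$'' convention), the degree-four summand is $\As(\pi)\otimes\omega_{E/F}$, not $\As(\pi)$. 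Your inert-place computation of the extension of the $\sigma$-invariant piece $\psi\otimes\psi^\sigma$ from $W_{E_w}$ to $W_{F_v}$ will produce a sign that accounts for precisely this quadratic twist (or, equivalently, corresponds to the alternative ``$-$'' normalization of $\As$ mentioned in the paper). This is harmless for the conclusion, since the degree-two summand ${\rm I}_E^F(\omega_\pi)$ is unaffected, but you should correct the formula before claiming equality of isobaric representations via Theorem~\ref{thm:isobaric}.
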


\begin{proof}
It is known that
$$
\wedge^2({\rm I}_F^E(\pi))\ = \  
{\rm As}(\pi)\otimes\omega_{E/F}\  \boxplus \ {\rm I}_E^F(\omega_{\pi}),
$$
where $\omega_{E/F}$ is the quadratic Hecke character of $F$ associated to $E/F$ by class field theory. 
See, for example, Kim \cite[\S3]{kim-crm}.
\end{proof}

\section{The proof of {\bf(i)}$\Rightarrow${\bf(iii)}}
\label{sec:1-implies-3}

It is equivalent to prove that if $\Pi$ is a cuspidal automorphic representation 
of $\GL(4,\Ad_F)$ which neither has a nontrivial self-twist nor is essentially self-dual,
then $\wedge^2(\Pi)$ is cuspidal. 
Observe that if an isobaric automorphic representation $\rho$ 
of $\GL(6,\Ad_F)$ is not cuspidal, then it must have an isobaric summand of  
degree $1,2,$ or $3$, i.e., there exists a cuspidal 
representation $\sigma$ of $\GL(n,\Ad_F)$, with $1 \leq n \leq 3$, 
such that $L^S(s, \rho \times \sigma)$ has a pole
at $s=1$. Again $S$ denotes a finite set of places of $F$, 
including all the archimedean ones, such that all the representations involved are unramified at places
outside $S$. 
Now with $\Pi$ as above, the  cuspidality of $\wedge^2\Pi$ follows 
from the following two propositions.

\begin{proposition} 
\label{prop:gl1-gl3-twist}
If $\Pi\not\cong\w{\Pi}\otimes\chi$ for all $\chi$, then 
$L^S(s,\Pi\otimes\sigma,\wedge^2\otimes\rho_2)$ is holomorphic at $s=1$ for 
every cuspidal representation $\sigma$ of $\GL(n,\Ad_F)$, $n=1,3$. 
\end{proposition}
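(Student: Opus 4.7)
The plan is to apply the Langlands--Shahidi machinery reviewed in Section~\ref{sec:ls-theory}. Fix $n \in \{1,3\}$ and set $G = \GSpin(2n+6)$ with maximal Levi $M = \GL(n) \times \GSpin(6)$. Given the cuspidal representations $\sigma$ of $\GL(n,\Ad_F)$ and $\Pi$ of $\GL(4,\Ad_F)$, choose any irreducible constituent $\Sigma$ of the pullback of $\sigma \otimes \Pi$ along the map $f\colon M \to \GL(n) \times \GL(4)$ of \eqref{GSpin4-GL4}. Recall the identification
$$L(s,\Sigma,r_1) \;=\; L(s,\sigma \otimes \Pi,\rho_n \otimes \wedge^2\rho_4),$$
so the $L$-function of interest is exactly the one produced by the Langlands--Shahidi construction with this data.

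The main step is to verify the hypothesis of Proposition~\ref{LSfact}, namely that $w_0(\Sigma) \not\cong \Sigma$. Both values $n=1$ and $n=3$ are odd, so Proposition~\ref{w0-action} yields
$$w_0(\sigma \otimes \Pi) \;=\; \w{\sigma} \otimes (\w{\Pi} \otimes \omega_\sigma).$$
An isomorphism $\Sigma \cong w_0(\Sigma)$ on $M$ would, upon pushing forward along $f$ and projecting to the $\GL(4)$-factor, force an isomorphism $\Pi \cong \w{\Pi} \otimes \omega_\sigma$ of representations of $\GL(4,\Ad_F)$. This directly contradicts the standing hypothesis that $\Pi \not\cong \w{\Pi} \otimes \chi$ for any Hecke character $\chi$ of $F$. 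Hence $w_0(\Sigma) \not\cong \Sigma$, and Proposition~\ref{LSfact} implies that $L(s,\Sigma,r_1)$ is entire.

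To finish, note that the partial $L$-function $L^S(s,\Pi \otimes \sigma,\wedge^2 \otimes \rho_n)$ differs from the completed one by the inverses of finitely many local factors at places $v \in S$. These local factors are nowhere vanishing, so their inverses contribute no poles; consequently the partial $L$-function is holomorphic (indeed entire) at $s=1$, which is exactly what is claimed.

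The only genuine technical point is ensuring that an isomorphism of the pullbacks $\Sigma \cong w_0(\Sigma)$ really does descend to the required twist condition on $\Pi$; this is a matter of bookkeeping with the central isogeny defining $f$ and with central characters on the $\GL(n)$-factor, and it presents no real difficulty because the outer tensor product decomposition of an irreducible representation of $\GL(n) \times \GL(4)$ is unique on each factor. All of the analytic content is packaged inside Proposition~\ref{LSfact}, itself a standard consequence of the Langlands--Shahidi method as used in \cite[Proposition 3.4]{kim-jams}.
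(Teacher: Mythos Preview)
Your argument is correct and follows essentially the same route as the paper: reduce to the non-self-conjugacy criterion of Proposition~\ref{LSfact}, use Proposition~\ref{w0-action} (with $n$ odd) to compute $w_0(\sigma\otimes\Pi)=\w{\sigma}\otimes(\w{\Pi}\otimes\omega_\sigma)$, and derive the contradiction $\Pi\cong\w{\Pi}\otimes\omega_\sigma$. Your added remarks on passing from the completed to the partial $L$-function and on the isogeny bookkeeping are reasonable elaborations that the paper leaves implicit.
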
 

\begin{proof} 
Let $\Sigma$ be as in (\ref{GSpin4-GL4}). By Proposition~\ref{LSfact}, 
it is enough to show that $w_0(\Sigma)\not\cong\Sigma$. If we have 
$w_0(\Sigma)\cong\Sigma$, then $w_0(\sigma\otimes\Pi)\cong\sigma\otimes\Pi$. 
On the other hand, by Proposition~\ref{w0-action} we have 
$w_0(\sigma\otimes\Pi)\cong\w{\sigma}\otimes(\w{\Pi}\otimes\omega_\sigma)$. 
In particular, we must have $\Pi\cong\w{\Pi}\otimes\omega_\sigma$ contradicting 
the hypothesis. 
\end{proof}

\begin{proposition} 
\label{prop:gl2-twist}
If $\Pi\not\cong\Pi\otimes\chi$ for all nontrivial $\chi$, then 
$L^S(s,\Pi\otimes\sigma,\wedge^2\otimes\rho_2)$ is holomorphic at $s=1$ for 
every cuspidal representation $\sigma$ of $\GL(2,\Ad_F)$. 
\end{proposition}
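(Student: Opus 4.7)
My plan is to imitate the proof of Proposition~\ref{prop:gl1-gl3-twist}, specialised to $n = 2$. Take $G = \GSpin(10)$ with Levi $M = \GL(2) \times \GSpin(6)$, and let $\Sigma$ be the constituent of $\sigma \otimes \Pi$ produced via the map $f$ of~(\ref{GSpin4-GL4}), so that $L^S(s, \Sigma, r_1) = L^S(s, \Pi \otimes \sigma, \wedge^2 \otimes \rho_2)$. By Proposition~\ref{LSfact}, this $L$-function is entire provided $w_0(\Sigma) \not\cong \Sigma$.

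Suppose for contradiction that $w_0(\Sigma) \cong \Sigma$. Since $f$ is surjective onto $\GL(2) \times \GL(4)$, the isomorphism descends to one of outer tensor products on $\GL(2) \times \GL(4)$. Proposition~\ref{w0-action} for $n$ even gives $w_0(\sigma \otimes \Pi) \cong \w{\sigma} \otimes (\Pi \otimes \omega_\sigma)$, so matching the $\GL(4)$-factors forces $\Pi \cong \Pi \otimes \omega_\sigma$. When $\omega_\sigma$ is nontrivial this directly contradicts the hypothesis, and we are done.

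The main obstacle is the residual case $\omega_\sigma = 1$, in which the above isomorphism is automatic and produces no contradiction. A pole of $L^S(s, \wedge^2\Pi \times \sigma)$ at $s=1$ is then equivalent (by Theorem~\ref{thm:rankin-selberg}) to the self-dual cuspidal $\sigma$ appearing as an isobaric summand of $\wedge^2\Pi$. My plan is to invoke the natural orthogonal self-duality $(\wedge^2\Pi)^{\vee} \cong \wedge^2\Pi \otimes \omega_\Pi^{-1}$ coming from the wedge pairing $\wedge^2\Pi \otimes \wedge^2\Pi \to \wedge^4\Pi = \omega_\Pi$: this pairs the isobaric summands via $\rho \leftrightarrow \rho^{\vee} \otimes \omega_\Pi$, so $\sigma \otimes \omega_\Pi$ is a summand as well. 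Re-applying the Langlands--Shahidi criterion to $\sigma \otimes \omega_\Pi$ (of central character $\omega_\Pi^2$) yields a self-twist of $\Pi$ by $\omega_\Pi^2$, nontrivial unless $\omega_\Pi$ is of order at most two.

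The remaining narrow subcases---where $\sigma$ is self-paired under $\omega_\Pi$ (forcing $\sigma$ dihedral from a quadratic extension $E/F$ with $\omega_\Pi = \omega_{E/F}$), or where $\omega_\Pi^2 = 1$---are to be dispatched by analyzing the complementary degree-$2$ summand $\tau$ of $\wedge^2\Pi$. Orthogonality forces $\tau \cong \tau^{\vee} \otimes \omega_\Pi$ and constrains its central character to equal $\omega_\Pi$, so one more application of Proposition~\ref{LSfact} to $\tau$ (or to its components, if $\tau$ is decomposable) produces the needed self-twist of $\Pi$. This careful subcase analysis is the delicate core of the argument.
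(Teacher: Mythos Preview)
Your treatment of the case $\omega_\sigma \neq 1\!\!1$ matches the paper's exactly. For the residual case $\omega_\sigma = 1\!\!1$, however, the paper takes a much shorter route than your proposed structural analysis: rather than dissecting the isobaric decomposition of $\wedge^2\Pi$, one simply twists $\Pi$ by an auxiliary Hecke character $\theta$. If $\sigma$ occurs in $\wedge^2\Pi$, then $\sigma\otimes\theta^2$ occurs in $\wedge^2(\Pi\otimes\theta)$, and $\Pi\otimes\theta$ inherits the hypothesis of having no nontrivial self-twist. Choosing any $\theta$ with $\theta^4\neq 1\!\!1$ makes $\omega_{\sigma\otimes\theta^2}=\theta^4$ nontrivial, so the already-established case applied to $\Pi\otimes\theta$ yields the contradiction at once. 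No subcase analysis is needed.

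Your approach, by contrast, has a genuine gap, not merely missing detail. You assert that the leftover subcases reduce to studying a ``complementary degree-$2$ summand $\tau$'' of $\wedge^2\Pi$, but such a $\tau$ need not exist. When $\omega_\Pi = 1\!\!1$ (which certainly falls under your subcase $\omega_\Pi^2=1\!\!1$), the summand $\sigma\otimes\omega_\Pi$ coincides with $\sigma$, so only one degree-$2$ piece has been identified and the complement in $\wedge^2\Pi$ has degree~$4$, not~$2$. The same occurs whenever $\sigma\cong\sigma\otimes\omega_\Pi$ (your ``self-paired'' subcase). In those situations there is no evident cuspidal $\GL(2)$-summand $\tau$ with nontrivial central character to which Proposition~\ref{LSfact} can be reapplied, and your argument produces no further self-twist of $\Pi$. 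The paper's twisting trick sidesteps this entirely by shifting $\omega_\sigma$ away from the trivial character rather than relying on the internal structure of $\wedge^2\Pi$.
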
 

\begin{proof} 
The same argument as in the above proof works as long as $\omega_\sigma \not= 1\!\!1$ 
because by Proposition~\ref{w0-action} we have 
$w_0(\sigma\otimes\Pi) \cong \w{\sigma}\otimes(\Pi\otimes\omega_\sigma) 
\not\cong \sigma\otimes\Pi$. This means that if $\sigma$ is a cuspidal representation of 
$\GL(2,\Ad_F)$ with nontrivial central character, 
then $\sigma$ cannot occur as an isobaric summand of $\wedge^2(\Pi)$.

Now suppose that $\sigma$ is a cuspidal representation of 
$\GL(2,\Ad_F)$ with trivial central character and that $\sigma$ is an isobaric summand of $\wedge^2(\Pi)$.
Then the representation $\sigma\otimes\theta^2$ 
occurs in $\wedge^2(\Pi\otimes\theta)$ for any Hecke character $\theta$. 
Note that $\Pi\otimes\theta$ also satisfies the hypothesis that
it has no nontrivial self-twists. 
Choose $\theta$ such that 
\[ \omega_{\sigma\otimes\theta^2} = \omega_\sigma \theta^4 = \theta^4 \not= 1\!\!1 \] 
to get a contradiction. 
\end{proof}

\section{The proof of {\bf(iii)}$\Rightarrow${\bf(ii)}}
\label{sec:3-implies-2}

We prove that if $\Pi$ satisfies {\bf(iii)}($\beta$), then it  is of the form {\bf(ii)}(d),
and if it satisfies {\bf(iii)}($\alpha$), then it is one of {\bf(ii)}(a)--(c).

\subsection{{\bf(iii)}($\beta$) $\Longrightarrow$ {\bf(ii)}(d)} 

Assume that 
\begin{equation}\label{chi4}
\Pi\cong\Pi\otimes\chi
\end{equation} 
for some nontrivial $\chi$. Taking central 
characters we have $\chi^4=1\!\!1$. If $\chi^2\not=1\!\!1$, then we may replace 
$\chi$ with $\chi^2$ in (\ref{chi4}), which means we may assume that the character $\chi$ 
in (\ref{chi4}) is quadratic. We want to show that $\Pi$ is induced from a quadratic extension. 
If $\Pi$ is a representation of $\GL(2)$, then the analogous statement is a 
well-known result due to Labesse-Langlands \cite{labesse-langlands}. In our case it follows from 
the work of Arthur-Clozel \cite{arthur-clozel} and some $L$-function arguments as we 
explain below. 

\begin{lemma}
Let $\Pi$ be a cuspidal representation of $\GL(2n,\Ad_F)$ satisfying $\Pi\cong\Pi\otimes\chi$ 
for a nontrivial quadratic character $\chi$. Then $\Pi={\rm I}_E^F(\pi)$, where $E/F$ 
is the quadratic extension associated with $\chi$ and $\pi$ is a cuspidal representation of 
$\GL(n,\Ad_E)$. 
\end{lemma}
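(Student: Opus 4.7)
The plan is to invoke the Arthur--Clozel theory of cyclic base change for the quadratic extension $E/F$ cut out by $\chi$ via class field theory. Let $\theta$ generate $\Gal(E/F)$ and let $\Pi_E$ denote the cyclic base change of $\Pi$ to $\GL(2n,\Ad_E)$, a $\theta$-invariant isobaric automorphic representation. The first step is to show $\Pi_E$ is not cuspidal by means of the standard factorization
$$L^S(s,\Pi_E\times\w{\Pi_E}) \;=\; L^S(s,\Pi\times\w\Pi)\cdot L^S(s,\Pi\times\w\Pi\otimes\chi).$$
The hypothesis $\Pi\cong\Pi\otimes\chi$ makes both right-hand factors equal, and by Theorem~\ref{thm:rankin-selberg} each has a simple pole at $s=1$, so the left-hand side has a double pole there. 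Decomposing $\Pi_E$ into isobaric summands and applying Theorems~\ref{thm:isobaric} and~\ref{thm:rankin-selberg} forces $\Pi_E=\sigma_1\boxplus\sigma_2$ for two distinct cuspidal representations $\sigma_i$ of $\GL(n_i,\Ad_E)$ with $n_1+n_2=2n$.

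Next I would exploit the canonical $\theta$-invariance of $\Pi_E$: by strong multiplicity one (Theorem~\ref{thm:isobaric}), $\theta$ permutes the multiset $\{\sigma_1,\sigma_2\}$, and I claim it must interchange the two. For if each $\sigma_i$ were $\theta$-fixed, the descent half of Arthur--Clozel would produce $F$-representations $\tau_i$ with base change $\sigma_i$; then $\Pi$ and $\tau_1\boxplus\tau_2$ would share the same base change, and the description of the fibers of cyclic base change as orbits under twisting by powers of $\chi$, together with $\Pi\otimes\chi\cong\Pi$, would force $\Pi\cong\tau_1\boxplus\tau_2$, contradicting cuspidality of $\Pi$. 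Hence $\sigma_2=\sigma_1^{\theta}$ with $\sigma_1\not\cong\sigma_1^{\theta}$, and in particular $n_1=n_2=n$. Setting $\pi:=\sigma_1$, a cuspidal representation of $\GL(n,\Ad_E)$ with $\pi\not\cong\pi^{\theta}$, Arthur--Clozel produces the automorphic induction ${\rm I}_E^F(\pi)$ as a cuspidal representation of $\GL(2n,\Ad_F)$ whose base change is $\pi\boxplus\pi^{\theta}=\Pi_E$. One more application of the twist-fiber description (and again $\Pi\otimes\chi\cong\Pi$) identifies $\Pi\cong{\rm I}_E^F(\pi)$, finishing the argument.

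The main obstacle is the $\theta$-fixed-point analysis in the middle step: converting hypothetical $\theta$-invariance of the individual $\sigma_i$ into an actual isobaric decomposition of $\Pi$ requires invoking Arthur--Clozel's precise fiber description for cyclic base change. Once that is in hand, the rest is routine bookkeeping with the Arthur--Clozel correspondence in prime degree.
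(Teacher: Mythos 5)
Your opening half coincides with the paper's: the same factorization
\[
L^T(s,\Pi_E\times\w{\Pi}_E)=L^S(s,\Pi\times\w\Pi)\,L^S(s,\Pi\times\w\Pi\otimes\chi)=L^S(s,\Pi\times\w\Pi)^2
\]
is used to force a double pole, hence $\Pi_E=\sigma_1\boxplus\sigma_2$ with $\sigma_1\not\cong\sigma_2$. (You are in fact a bit more careful than the paper, which immediately writes both summands on $\GL(n)$; your $\theta$-action argument supplies the justification that $n_1=n_2=n$, which the paper leaves implicit.) You diverge in the endgame. The paper stays entirely inside the Rankin--Selberg world: it computes $L^S(s,{\rm I}_E^F(\pi_1)\times\w\Pi)=L^T(s,\pi_1\times\w\pi_1)L^T(s,\pi_1\times\w\pi_2)$, reads off a simple pole, and concludes $\Pi\cong{\rm I}_E^F(\pi_1)$ by Theorem~\ref{thm:rankin-selberg}. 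You instead lean on the Arthur--Clozel fiber description of base change (fibers are $\langle\chi\rangle$-twist orbits), both to rule out the case where $\theta$ fixes each $\sigma_i$ and to identify $\Pi$ with ${\rm I}_E^F(\pi)$ at the end. Both endgames work, but the paper's is more self-contained given the tools already set up in Section~\ref{sec:j-ps-s}.

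One step of yours deserves tightening. In the $\theta$-fixed-point contradiction you write that, because $\Pi$ and $\tau_1\boxplus\tau_2$ share the same base change, the twist-orbit description ``forces $\Pi\cong\tau_1\boxplus\tau_2$.'' Arthur--Clozel's orbit statement is for \emph{cuspidal} representations with a common cuspidal base change; here $\tau_1\boxplus\tau_2$ is not cuspidal and $\Pi_E$ is not cuspidal either, so the statement does not apply off the shelf. The cleaner way to get the contradiction (and the one implicit in the paper) is: if $\sigma_1\cong\sigma_1^\theta$ then ${\rm I}_E^F(\sigma_1)$ is $\tau_1\boxplus(\tau_1\otimes\chi)$, while $L^S(s,{\rm I}_E^F(\sigma_1)\times\w\Pi)=L^T(s,\sigma_1\times\w\sigma_1)L^T(s,\sigma_1\times\w\sigma_2)$ has a simple pole at $s=1$; so one of $L^S(s,\tau_1\times\w\Pi)$, $L^S(s,\tau_1\otimes\chi\times\w\Pi)$ has a pole, which by Theorem~\ref{thm:rankin-selberg} would make $\Pi$ isomorphic to a representation of strictly smaller degree. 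With that repair (or by simply citing Arthur--Clozel's theorem that a cuspidal $\Pi$ with non-cuspidal base change is automorphically induced, which is really the statement being proved), the argument is complete.
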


\begin{proof}
We first claim that the base change $\Pi_E$ is not cuspidal. 
To see this, assume that it is cuspidal. For a finite 
set $S$ of places of $F$ and the corresponding set $T$ of places of $E$ lying above those 
in $S$, as before, we have 
\begin{eqnarray}\label{DoublePole}
L^T(s,\Pi_E\times\w{\Pi}_E) &=& L^S(s,\Pi\times\w{\Pi}) L^S(s,\Pi\times\w{\Pi}\otimes\chi) \\
\nonumber &=& L^S(s,\Pi\times\w{\Pi})^2.
\end{eqnarray}
For sufficiently large $S$, the left hand side of (\ref{DoublePole}) has a simple pole at 
$s=1$ while the right hand side has a double pole at $s=1$. This contradiction shows that 
$\Pi_E$ is not cuspidal. This means that $\Pi_E=\pi_1\boxplus\pi_2$, where $\pi_i$ are 
cuspidal representations of $\GL(n,\Ad_E)$. Further, $\pi_1 \not\cong \pi_2$, because 
if they are equivalent, then 
$$
L^S(s,\Pi\times\w{\Pi})^2 = 
L^T(s,\Pi_E\times\w{\Pi}_E) = L^T(s,\pi_1\times\w{\pi}_1)^4,
$$
but $L^S(s,\Pi\times\w{\Pi})^2$ has a double pole and  
$L^T(s,\pi_1\times\w{\pi}_1)^4$ has a pole of order $4$ at $s=1$. 

Next, we claim that $\Pi\cong{\rm I}_E^F(\pi_1)$. 
To show this it is enough to prove that the partial $L$-function 
$L^S(s,{\rm I}_E^F(\pi_1)\times\w{\Pi})$ has a simple pole at $s=1$. This follows from 
\begin{eqnarray*}
L^S(s,{\rm I}_E^F(\pi_1)\times\w{\Pi}) 
& = & L^S(s,{\rm I}_E^F(\pi_1\times\w{\Pi}_E)) \\
& = & L^T(s,\pi_1\times\w{\Pi}_E) \\
& = & L^T(s,\pi_1\times\w{\pi}_1)L^T(s,\pi_1\times\w{\pi}_2).
\end{eqnarray*}
Since $\pi_2 \not\cong\pi_1$ we know that $L^T(s,\pi_1\times\w{\pi}_1)L^T(s,\pi_1\times\w{\pi}_2)$ 
has a simple pole at $s=1$. 
\end{proof}

\subsection{{\bf(iii)}($\alpha$) $\Longrightarrow$ {\bf(ii)}(a)--(c)}

Now assume that 
\begin{equation}
\label{chi2}
\Pi \cong \w{\Pi} \otimes \chi
\end{equation} 
for some $\chi$. For a finite set $S$ of places of $F$, as before, we have 
\begin{eqnarray*} 
L^S(s,\Pi\times\w{\Pi}) 
&=& L^S(s,\Pi\times(\Pi\otimes\chi^{-1})) \\
&=& L^S(s,(\Pi\times\Pi)\otimes\chi^{-1}) \\
&=& L^S(s,\Pi,\wedge^2\otimes\chi^{-1}) L^S(s,\Pi, \mbox{Sym}^2\otimes\chi^{-1}). 
\end{eqnarray*}
The last two $L$-functions are the standard twisted exterior square and twisted 
symmetric square $L$-functions of $\Pi$.  
If $S$ is sufficiently large, then $L^S(s,\Pi\times\w{\Pi})$ has a simple pole 
at $s=1$. Therefore one and exactly one of the partial twisted exterior or 
symmetric square $L$-functions has a simple pole at $s=1$. 

First, assume that $L^S(s,\Pi,\wedge^2\otimes\chi^{-1})$ has a pole at $s=1$. Then 
there exists a cuspidal representation $\pi$, which may be taken to be globally 
generic, of $\GSp(4,\Ad_F)$ such that $\Pi$ is the functorial transfer of $\Pi$, i.e., 
$\Pi$ is a representation as in {\bf(ii)}(c). This 
result has been known for a long time and, we believe, is originally due to Jacquet, 
Piatetski-Shapiro, and Shalika. See Gan-Takeda \cite{gan-takeda} for a 
proof. It would also follow from the more general method of ``descent'' as we explain 
below.  

Next, assume that $L^S(s,\Pi, \mbox{Sym}^2\otimes\chi^{-1})$ has a pole at $s=1$. 
Taking central characters in (\ref{chi2}) we have $\omega_\Pi=\omega_\Pi^{-1} \chi^4$. 
In other words, $\mu=\omega_\Pi\chi^{-2}$ is a quadratic character. If $\mu$ is trivial, 
then $\Pi$ is a transfer from a cuspidal representation $\pi$ of $\GSpin(4)$, a 
split connected reductive group of type $D_4$ whose derived group is ${\rm Spin}(4)$. 
If $\mu$ is nontrivial, then $\Pi$ is a functorial transfer from the 
quasi-split non-split group $\GSpin^*(4)$ associated with the quadratic extension $E/F$ attached to $\mu$. 

These facts can be proved using the ``descent'' method of Ginzburg-Rallis-Soudry. 
If $\chi$ is trivial, then $\Pi$ would be a transfer from a special orthogonal or 
symplectic group. We refer to Ginzburg-Rallis-Soudry \cite[Theorem A]{grs} and 
Soudry \cite[Theorem 4 and 12]{soud} for the proofs for classical groups. 
For the more general case of classical similitude groups 
we understand that J. Hundley and E. Sayag are now 
in the process of writing down a proof of this fact among other things. For certitude, 
we state the precise statement we are using from the descent theory below. 

\begin{quote}
Let $\Pi$ be a cuspidal automorphic representation of $\GL(4,\Ad_F)$ satisfying 
$\Pi\cong\w{\Pi}\otimes\chi$. Assume that the partial $L$-function 
$L^S(s,\Pi,\mbox{Sym}^2\otimes\chi^{-1})$ has a pole at $s=1$ for a sufficiently large 
finite set $S$ of places of $F$. If $\mu=\omega_\Pi \chi^{-2}$ is trivial, then there 
exists a globally generic cuspidal representation $\pi$ of  
$\GSpin(4,\Ad_F)$ such that $\pi$ transfers to $\Pi$. If $\mu$ is a nontrivial (necessarily 
quadratic) character, and $E/F$ is the associated quadratic extension, then there 
exists a globally generic cuspidal representation $\pi$ of 
$\GSpin^*(4,\Ad_F)$ such that $\pi$ transfers to $\Pi$. (In our notation $\GSpin^*(4)$ 
is a non-split quasi-split group over $F$ which is isomorphic to the split $\GSpin(4)$ over $E$.)
\end{quote}

With the above notation, if $\mu$ is trivial, then $\Pi$ is as in  
{\bf(ii)}(a), and if it is nontrivial then $\Pi$ is as in {\bf(ii)}(b).

\section{Examples and Complements} 
\label{sec:examples}
In this section we give a few examples of our main result. In some of them the proposed 
representation $\Pi$ of $\GL(4,\Ad_F)$ is not yet proved to be automorphic, but it is 
conjecturally so.  We also comment on possible intersection among the four cases in 
part {\bf (ii)} of Theorem~\ref{thm:main}. Finally, 
we present a theorem due to A. Kable on when the exterior square of an irreducible four-dimensional
representation is reducible. 

\subsection{K. Martin's $\bf G_{192}$} 
\label{sec:martin}

The matrices 
{\Small
\[ 
a = \left[   \begin{matrix} 
      -1 &&& \\
      &-1&& \\
      &&1& \\
      &&&1
   \end{matrix} \right], \  
b = \left[   \begin{matrix} 
      -1 &&& \\
      &1&& \\
      &&-1& \\
      &&&1
   \end{matrix} \right],  \ 
c = \left[   \begin{matrix} 
      &&-1& \\
      &&&1 \\
      -i&&& \\
      &-i&&
   \end{matrix} \right],  \ 
d = \left[   \begin{matrix} 
      &-1&& \\
      &&-1& \\
      1&&& \\
      &&&1
   \end{matrix} \right] \]} 

\noindent
in ${\rm GL}(4,{\mathbb C})$ generate a group $G_{192}$ of order $192$.  
Let $\rho$ be the four-dimensional representation of the group
$G_{192}$ given by inclusion. Then $\rho$ is an irreducible representation. 
In \cite{kimball} K.~Martin showed that $\rho$ is modular, i.e., there exists a (cuspidal) automorphic
representation $\Pi(\rho)$ that corresponds to $\rho$. 

\begin{exam}
\label{exam:martin}
Let $\Pi_1 = \Pi(\rho)$. Then $\Pi_1$ is a
cuspidal automorphic representation of $\GL(4,\Ad_F)$. Moreover, it is neither essentially self-dual nor does it 
have a nontrivial self-twist. It is not on the list of possibilities of {\bf (ii)}. Furthermore, 
$\wedge^2\Pi_1$ is cuspidal. In other words, $\Pi_1$ is an example of a cuspidal representation which 
does not satisfy any of {\bf (i)}--{\bf (iii)} of Theorem~\ref{thm:main}.
\end{exam}

\begin{proof}
First we check that $\wedge^2(\rho)$ is an irreducible representation. To see this 
consider the standard basis $\langle e_1,e_2,e_3,e_4\rangle$ for $\C^4$ and fix 
the ordered basis $\langle w_1,w_2,\dots,w_6\rangle$ 
of $\C^6=\wedge^2 \C^4$ given by
\[
\begin{matrix}
w_1=e_1\otimes e_2 - e_2\otimes e_1, \ 
w_2=e_1\otimes e_3 - e_3\otimes e_1, \ 
w_3=e_1\otimes e_4 - e_4\otimes e_1, 
\\
w_4=e_2\otimes e_3 - e_3\otimes e_2, \ 
w_5=e_2\otimes e_4 - e_4\otimes e_2, \ 
w_6=e_3\otimes e_4 - e_4\otimes e_3. 
\end{matrix}
\] 
Let $A,B,C,D$ be the images of $a,b,c,d$ under $\wedge^2$, respectively. Then, with 
respect to the above basis, we have
\[
A = \left[   \begin{matrix} 
      1&&&&& \\
      &-1&&&& \\
      &&-1&&& \\
      &&&-1&& \\
      &&&&-1& \\
      &&&&&1 
   \end{matrix} \right],  
B = \left[   \begin{matrix} 
      -1&&&&& \\
      &1&&&& \\
      &&-1&&& \\
      &&&-1&& \\
      &&&&1& \\
      &&&&&-1 
   \end{matrix} \right], 
\] \[  
C = \left[   \begin{matrix} 
      &&&&&-1 \\
      &-i&&&& \\
      &&&-i&& \\
      &&i&&& \\
      &&&&i& \\
      1&&&&& 
   \end{matrix} \right], 
D = \left[   \begin{matrix} 
      &&&1&& \\
      1&&&&& \\
      &&&&-1& \\
      &1&&&& \\
      &&&&&-1 \\
      &&1&&& 
   \end{matrix} \right].   
\]
It is easy to check that if a $6\times 6$ matrix $X$ commutes with $A,B$, and $C$, then 
it has to be of the form 
\[ X = \left[   \begin{matrix} 
      a&&&&&b \\
      &c&&&& \\
      &&e&f&& \\
      &&-f&e&& \\
      &&&&d& \\
      -b&&&&&a 
   \end{matrix} \right].   
\] 
Further, if $XD=DX$, then $a=c=d=e$ and $b=f=0$, i.e., $X$ is a scalar matrix. 
Therefore, ${\rm Hom}_{G_{192}}(\wedge^2\rho,\wedge^2\rho) = \C$ and, by Schur's lemma, 
the representation $\wedge^2\rho$ is irreducible. This implies that $\wedge^2\Pi_1$
is cuspidal. (This is because if a complex Galois representation $\sigma$ is modular, i.e.,
corresponds to an automorphic representation $\pi = \pi(\sigma)$, then 
$\sigma$ is irreducible if and only if $\pi$ is cuspidal. This fact follows from  
$L(s,\sigma\otimes\sigma^{\vee}) = L(s,\pi\times\pi^{\vee})$; 
see Ramakrishnan \cite[Introduction]{ramak-irr-cusp}.) 
Martin observes that $\rho$, and hence $\Pi_1$, is not essentially self-dual. 
Clearly $\Pi_1$ is not on the list of possibilities in {\bf (ii)} of the main theorem 
because if it were, then $\wedge^2\Pi_1$ would not be cuspidal (see Section \ref{sec:2-implies-1}). 
Hence $\Pi_1$ does not satisfy any of the equivalent statements of Theorem~\ref{thm:main}.
\end{proof}

In \cite{kimball} Martin considers a four-dimensional irreducible representation $\rho$ of
the absolute Galois group of ${\mathbb Q}$ whose image in ${\rm PGL}(4,{\mathbb C})$,
denoted $\bar{G}$, is an extension of $A_4$ by $V_4$. In this situation 
$\bar{G}$ is either $V_4\rtimes A_4$  or $V_4 \cdot A_4$.  
In the former case $\rho$ is of ${\rm GO}(4)$-type and 
$\Pi = \Pi(\rho)$ is a transfer from $\GL(2,\Ad_F) \times \GL(2,\Ad_F)$, which is contained in 
our {\bf(ii)}(a). The example of $G_{192}$ is an instance of the latter situation. In either case,
$\Pi(\rho)$ may also be thought of as being obtained by automorphic induction across a non-normal 
quartic extension with no quadratic subextension.

\subsection{The standard representation of $S_5$}
\label{sec:s5-a5}

Consider a tower of number fields $\tilde{J}/J/E/F$. Here, $E/F$ and
$\tilde{J}/J$ are quadratic extensions, $J/E$ is an $A_5$-extension, $J/F$ is an
$S_5$-extension, $\tilde{J}/E$ is an $\SL(2,{\mathbb F_5})$-extension, and 
$\tilde{J}/F$ is Galois. 
(Recall that $A_5\cong \PSL(2,{\mathbb F_5})$.) 
In what follows we identify $A_5$, $S_5$ and $\SL(2,{\mathbb F_5})$ with
these Galois groups.

Let $\sigma$ be the standard four-dimensional irreducible representation of $S_5$, 
\[ \sigma : S_5 \longrightarrow \GL(4,\C). \] 
Here are some properties of $\sigma$: 
\begin{enumerate}
\item $\wedge^2\sigma$ is a six-dimensional irreducible representation of $S_5$ (see 
\cite[\S3.2]{fulton-harris}). 
\item $\sigma|_{A_5}$ is irreducible (because $\sigma\not\cong\sigma\otimes\epsilon$, 
where $\epsilon$ is the sign character of $S_5$).  
\item $\wedge^2(\sigma|_{A_5}) = (\wedge^2\sigma)|_{A_5}$ is reducible (because 
$\wedge^2(\sigma)\cong\wedge^2(\sigma)\otimes\epsilon$), and its irreducible 
constituents are both of degree $3$. 
\item $\sigma$ is self-dual (because the character of $\sigma$ has integer values). 
\item $\sigma|_{A_5} = \rho_1 \otimes \rho_2$, where $\rho_1$ and $\rho_2$ are the two-dimensional 
irreducible representations of $\SL(2,\mathbb F_5)$ (see \cite[Lemma 5.1]{kim-inv}). 
\end{enumerate}
We need some details about the $\rho_i$. Let $\tilde{\rho}$ be the unique (up to twists) cuspidal
representation of $\GL(2,\mathbb F_5)$ whose restriction to $\SL(2,\mathbb F_5)$ is reducible.
In this case, $\tilde{\rho}|_{\SL(2,\mathbb F_5)} = \rho_1 \oplus \rho_2$. 
If $g \in \GL(2,\mathbb F_5) - 
Z({\mathbb F}_5)\SL(2,\mathbb F_5)$, then $\rho_1^g = \rho_2$. Here $Z$ is the center of 
$\GL(2)$. Conjugating $\SL(2,\mathbb F_5)$ by 
such an element $g$ induces the nontrivial outer automorphism of $\SL(2,\mathbb F_5)$ because 
if it were an inner automorphism, then we would have $\rho_1 \cong \rho_2$, which contradicts the fact that
the restriction from $\GL_2$ to $\SL_2$ is multiplicity free.

Let $\rho$ denote either $\rho_1$ or $\rho_2$. 
In constructing examples (to illustrate our main theorem), we make the following
{\it assumption: $\rho$ is modular}, i.e., there exists 
a cuspidal automorphic representation $\pi(\rho)$ of $\GL(2,\Ad_E)$ with
$\rho\leftrightarrow\pi(\rho)$. In this situation, it is 
expected \cite{kim-private} that there exists an automorphic representation $\pi(\sigma)$ of 
$\GL(4,\Ad_F)$ with $\pi(\sigma)\leftrightarrow\sigma$ and $\pi(\sigma)$  is the Asai transfer of 
$\pi(\rho)$, i.e., $\pi(\sigma) = \As\left(\pi(\rho)\right)$.

\begin{exam}
\label{exam:s5}
Let $\Pi_2 =\pi(\sigma) = \As(\pi(\rho))$. Then $\Pi_2$ is a 
cuspidal automorphic representation of $\GL(4,\Ad_F)$ which is self-dual. However, it is the Asai transfer
of a nondihedral cuspidal representation. Moreover, it has no nontrivial self-twists and 
it is not on the list of possibilities in {\bf (ii)}. Furthermore, its exterior square transfer
$\wedge^2\Pi_2$ is cuspidal. In other words, $\Pi_2$ is an example of a cuspidal representation which 
does not satisfy any of {\bf (i)}--{\bf (iii)} of Theorem~\ref{thm:main}.
\end{exam}

\begin{proof}
Since $\Pi_2 = \pi(\sigma)$ and $\sigma$ is irreducible, we conclude that $\Pi_2$ is cuspidal. 
(Cuspidality of $\Pi_2$ may also be seen by appealing to 
the cuspidality criterion for the Asai transfer due to Ramakrishnan \cite[Theorem 1.4]{ramak-imrn}.)

Next, we note that $\Pi_2$ is self-dual and is the Asai transfer of a cuspidal representation, namely 
$\pi(\rho)$. Note that $\pi(\rho)$ is not dihedral as $\rho$ is not induced from a character of an
index two subgroup because there is no such subgroup in ${\rm SL}(2,{\mathbb F}_5)$.
Also, $\Pi_2$ has no nontrivial self-twists because $\sigma$ has no nontrivial self-twists. 

Finally, note that $\wedge^2 \Pi_2$ is cuspidal since 
$\wedge^2 \Pi_2 = \wedge^2\pi(\sigma) = \pi(\wedge^2\sigma)$ and $\wedge^2\sigma$ 
is an irreducible representation implying that $\pi(\wedge^2\sigma)$ is 
cuspidal. (See, for example, Ramakrishnan \cite[Introduction]{ramak-irr-cusp}.)
\end{proof}

\begin{exam}
\label{exam:a5}
Let $\Pi_3 = (\Pi_2)_E$ be the base change of $\Pi_2$ to an automorphic
representation of ${\rm GL}(4,\Ad_E)$.
Then $\Pi_3$ is a cuspidal automorphic representation of $\GL(4,\Ad_E)$ 
which is self-dual and is not the Asai transfer of a nondihedral representation. 
It is contained in {\bf (ii)}(a) and its exterior square transfer $\wedge^2\Pi_3$ is not cuspidal. 
In other words, $\Pi_3$ is an example of a cuspidal representation which 
satisfies {\bf (i)}--{\bf (iii)} of Theorem~\ref{thm:main}.
\end{exam}

\begin{proof}
To see cuspidality of $\Pi_3$, as well as the fact that it is contained in {\bf (ii)}(a), note that
\[ 
\Pi_3 = (\Pi_2)_E = \pi(\sigma)_E = \pi(\sigma|_{A_5}) = 
\pi(\rho_1\otimes\rho_2) = \pi(\rho_1) \boxtimes \pi(\rho_2). 
\]
Neither $\rho_i$ is monomial since ${\rm SL}(2,{\mathbb F}_5)$ does not have an index two 
subgroup. Applying the cuspidality criterion for $\pi(\rho_1) \boxtimes \pi(\rho_2)$
due to Ramakrishnan \cite[Theorem 11.1]{ramak-tifr}, we see that $\Pi_3$ is not cuspidal if and only
if $\pi(\rho_1) \cong \pi(\rho_2)\otimes\mu$ for some Hecke character $\mu$ of $E$. 
On the other hand, $\pi(\rho_2)\otimes\mu = \pi(\rho_2\otimes \mu)$, where we identify the
Hecke character $\mu$ with a character of the absolute Galois group of $E$
via global class field theory. Hence, we have 
$\pi(\rho_1) = \pi(\rho_2\otimes\mu)$. This implies that
$\rho_1\cong\rho_2\otimes\mu$ (since, for any two Galois
representations $\tau_1$ and $\tau_2$, one has $\pi(\tau_1)\cong\pi(\tau_2)$ if
and only if $\tau_1\cong\tau_2$; one can see this by considering the equality 
$L^S(s, \pi(\tau_1) \times \pi(\tau_2)^{\vee}) = L^S(s, \tau_1 \otimes \tau_2^{\vee})$).
Therefore, $\mu$ is a character of $\mathrm{Gal}(\tilde{J}/E)=\SL(2,{\mathbb F}_5)$, 
a perfect group, hence $\mu$ is trivial. Whence $\rho_1=\rho_2$, which contradicts 
the fact that they are inequivalent as was observed earlier.

Next, observe that $\wedge^2\Pi_3$ is not cuspidal because 
\[
\wedge^2\Pi_3 = \wedge^2(\pi(\rho_1) \boxtimes \pi(\rho_2)) = 
({\rm Sym}^2(\pi(\rho_1))\otimes\omega_{\pi(\rho_2)}) \oplus 
({\rm Sym}^2(\pi(\rho_2))\otimes\omega_{\pi(\rho_1)}),
\]
which is of isobaric type $(3,3)$. (See Proposition~\ref{prop:gl2xgl2}.)

Finally, we observe that $\Pi_3$ is self-dual because $\sigma$,
and hence $\sigma|_{A_5}$, is self-dual and that 
$\Pi_3$ could not be an Asai transfer of a nondihedral representation because if it were, then 
$\wedge^2(\Pi_3)$ would be cuspidal by Proposition~\ref{prop:asai}.
\end{proof}

\subsection{On possible intersections between representations in {\bf (ii)}.}

The purpose of this subsection is to show that the cases {\bf (ii)}(a) through 
{\bf (ii)}(d) are not mutually exclusive.

\begin{exam}
Let $\pi = {\rm I}_E^F(\chi)$ be a cuspidal automorphic representation of 
${\rm GL}(2,{\mathbb A}_F)$ which is automorphically induced from a Hecke character $\chi$ of 
$E$, where $E/F$ is a quadratic extension.
Let $\tau$ be a nondihedral cuspidal automorphic representation of ${\rm GL}(2,{\mathbb A}_F)$.
Let $\Pi_4 = \pi \boxtimes \tau$. 
Then $\Pi_4$ is a representation that is common to {\bf (ii)}(a), 
{\bf (ii)}(b) and {\bf (ii)}(c). 
\end{exam}

\begin{proof}
From Ramakrishnan's cuspidality criterion \cite[Theorem 11.1]{ramak-tifr}
we know that $\Pi_4$ is a cuspidal representation of ${\rm GL}(4,{\mathbb A}_F)$. By construction, 
$\Pi_4$ is in {\bf (ii)}(a). 

We observe that
$$
\Pi_4 = {\rm I}_E^F(\chi) \boxtimes \tau = {\rm I}_E^F(\chi \otimes \tau_E).
$$
Since the induced representation ${\rm I}_E^F(\chi \otimes \tau_E)$ is cuspidal, the 
inducing representation $\chi \otimes \tau_E$ is, {\it a fortiori}, cuspidal. Hence 
$\Pi_4$ is in {\bf (ii)}(b). 

Now we claim that 
$\Pi$ is also a transfer from a (generic) cuspidal representation of ${\rm GSp}(4,{\mathbb A}_F)$.
To see this we recall the following well known identities: 
\begin{eqnarray*}
{\rm Sym}^2({\rm I}_E^F(\chi)) & = & {\rm I}_E^F(\chi^2) \boxplus \chi |_{{\mathbb A}_F^{\times}}, \\
\wedge^2({\rm I}_E^F(\chi)) & = & \chi |_{{\mathbb A}_F^{\times}} \cdot \omega_{E/F}, 
\end{eqnarray*}
where $\omega_{E/F}$ is the quadratic Hecke character of $F$ associated to $E/F$ by class field theory. 
In particular, the central character of $\pi$ is given by $\omega_{\pi} = 
\chi |_{{\mathbb A}_F^{\times}} \cdot \omega_{E/F}$. 
From Proposition \ref{prop:gl2xgl2} we have
\[
\wedge^2(\pi \boxtimes \tau) = 
\left( {\rm Sym}^2(\pi) \otimes \omega_{\tau} \right) \boxplus 
\left( {\rm Sym}^2(\tau) \otimes \omega_{\pi} \right). 
\]
For brevity write $\omega = \omega_{\pi}\omega_{\tau}$. We deduce that 
\[
\wedge^2(\Pi_4) = \left({\rm I}_E^F(\chi^2)\otimes\omega_{\tau}\right)
\boxplus \omega \omega_{E/F}
\boxplus \left( {\rm Sym}^2(\tau) \otimes \omega_{\pi} \right). 
\]
Hence, the partial $L$-function 
$L^S(s, \Pi, \wedge^2\otimes (\omega \omega_{E/F})^{-1})$ has a pole at $s=1$.  
Applying a recent result of Gan and Takeda \cite{gan-takeda} we 
conclude that $\Pi$ is a transfer from ${\rm GSp}(4)$, i.e., $\Pi$ is in {\bf (ii)}(c).
\end{proof}

\subsection{A calculation on the Galois side}
\label{sec:galois}

As mentioned in the introduction, one may ask for an irreducibility criterion on the {\it Galois side}, 
i.e., for $\ell$-adic Galois representations, which reflects the cuspidality criterion one is looking
for. In this section we present such a theorem due to A.~Kable.
We are grateful to him for the permission to include this material here. 
Theorem~\ref{thm:galois-side} below is the analogue of 
the equivalence of {\bf (i)} and {\bf (iii)} in Theorem~\ref{thm:main}. We begin by reviewing some preliminaries.

Let $k$ be an algebraically closed field whose characteristic is not two
and let $V$ be a  four-dimensional $k$-vector space. Fix a nonzero element
$\eta \in \wedge^4V$. There is a nondegenerate symmetric bilinear form
$B$ on $\wedge^2V$ defined by $\omega_1\wedge\omega_2=B(\omega_1,\omega_2)\eta$
for all $\omega_1,\omega_2\in \wedge^2V$. The bilinear space $(\wedge^2V,B)$ is isomorphic
to the orthogonal sum of three hyperbolic planes. Let
$\mathrm{GO}(B)$ be the group of similitudes of $B$,
$\lambda:\mathrm{GO}(B)\to k^{\times}$ the similitude character, and
$\mathrm{GSO}(B)$ the subgroup of proper similitudes. This subgroup
consists of those $T\in\mathrm{GO}(B)$ such that $\det(T)=\lambda(T)^3$, 
and it coincides with the connected component of the identity in the
algebraic group $\mathrm{GO}(B)$. (In the literature, the group ${\rm GSO}(B)$
is also denoted by ${\rm SGO}(B)$ or ${\rm GO}^+(B)$.)
We use similar notation also for
the similitude groups of forms on $V$ itself. Let
$\rho:\GL(V)\to\GL(\wedge^2V)$ be the homomorphism $\rho(S)=\wedge^2S$.

Let $G$ be a group and let $\sigma$ be an irreducible representation of $G$ on
$V$. Recall that $\sigma$ is \emph{essentially self-dual} if there is a
character $\chi$ of $G$ such that $\sigma^{\vee}\cong\chi\otimes\sigma$. In this
case, $\chi^{-1}$ is a subrepresentation of $\sigma\otimes\sigma$. We
say that $\sigma$ has \emph{symplectic type} if $\chi^{-1}$ occurs in
$\wedge^2\sigma$ and \emph{orthogonal type} if $\chi^{-1}$ occurs in
${\rm Sym}^2\sigma$. If $\sigma$ is essentially self-dual of orthogonal type, 
then there is a nonzero symmetric bilinear form $C$ on $V$ such
that $G$ acts on $V$ by similitudes of $C$. The kernel of $C$ is a
$G$-invariant proper subspace of $V$ and hence trivial. Thus, $C$ is
nondegenerate and $\sigma(G)\subset\mathrm{GO}(C)$. If 
$\sigma(G)\subset\mathrm{GSO}(C)$, then we say that $\sigma$ is of
\emph{proper orthogonal type}; otherwise, we say that $\sigma$ is of
\emph{improper orthogonal type}. Finally, we say that $\sigma$ has a
\emph{nontrivial quadratic self-twist} if there is a nontrivial $\{\pm 1\}$-valued
character $\chi$ of $G$ such that $\sigma\cong\chi\otimes\sigma$.

\begin{theorem}[A.~Kable]
\label{thm:galois-side}
Let $\sigma$ be an irreducible $4$-dimensional representation of a
group $G$ over an algebraically closed field whose characteristic
is not two. Then the following two conditions on $\sigma$ are equivalent:
\begin{enumerate}
\item $\wedge^2\sigma$ is reducible.
\item $\sigma$ satisfies at least one of the following:
  \begin{enumerate}
  \item is essentially self-dual of symplectic type, 
  \item has a nontrivial quadratic self-twist, or
  \item is essentially self-dual of proper orthogonal type.
  \end{enumerate}
\end{enumerate}
\end{theorem}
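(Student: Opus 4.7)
The plan is to treat the two implications separately. For (2) $\Rightarrow$ (1) each of the three conditions produces an explicit invariant subspace of $\wedge^2 V$. In case (a), by the definition of symplectic type, the character $\chi^{-1}$ occurs as a subrepresentation of $\wedge^2\sigma$, giving a $G$-invariant line. In case (b), Schur's lemma applied to an intertwiner $T\colon V\to V$ with $T\sigma(g)=\chi(g)\sigma(g)T$ shows $T^2$ is a scalar, which after rescaling may be taken to be $I$; since $\chi$ is nontrivial, any $g$ with $\chi(g)=-1$ swaps the $\pm 1$-eigenspaces $V_{\pm}$ of $T$, and irreducibility of $\sigma$ forces $\dim V_{\pm}=2$. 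Hence
\[
\wedge^2 V=(\wedge^2 V_+\oplus\wedge^2 V_-)\oplus(V_+\otimes V_-)
\]
is a $G$-stable decomposition into pieces of dimensions $2$ and $4$. In case (c), proper similitudes of $C$ commute with the Hodge star operator on $\wedge^2 V$ determined by $C$ and $\eta$, so $\wedge^2 V$ splits as the sum of its two three-dimensional Hodge eigenspaces, both $G$-stable.

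For (1) $\Rightarrow$ (2) I would choose a minimal nonzero $G$-invariant subspace $U\subset\wedge^2 V$. Since $\wedge^2\sigma(g)$ acts on $B$ as a similitude with factor $\det\sigma(g)$, the complement $U^{\perp}$ is also $G$-invariant; minimality then forces $\dim U\le 3$ and $U\cap U^{\perp}\in\{0,U\}$. If $\dim U=1$ then $\wedge^2\sigma$ has a character subrepresentation, which is case (a).

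Suppose $\dim U=2$. A totally isotropic $2$-plane in $\wedge^2 V$ has the form $L\wedge H$ for a flag $L\subset H\subset V$ with $\dim L=1$ and $\dim H=3$, and $L$ is recovered from $U$ as the intersection of the supports of its decomposable elements, so $G$-invariance of such a $U$ would contradict the irreducibility of $\sigma$. Hence $B\vert_U$ is nondegenerate, and over an algebraically closed field of odd characteristic this makes $U$ a hyperbolic plane with exactly two isotropic lines $L_{\pm}$, each spanned by a decomposable vector whose $2$-dimensional support $W_{\pm}\subset V$ is then uniquely determined. Minimality of $U$ rules out either $L_{\pm}$ being individually $G$-fixed, so $G$ swaps them and defines a nontrivial quadratic character $\chi$ with index-two kernel $H$ preserving each $W_{\pm}$. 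Irreducibility of $\sigma$ forces $W_+\cap W_-=0$, since a nonzero intersection would be $H$-stable and mapped into itself by the swapping element, hence $G$-stable; so $V=W_+\oplus W_-$ and $\sigma=\mathrm{Ind}_H^G(\sigma\vert_{W_+})$. Any induced representation is invariant under twisting by the sign character of $G/H$, giving $\sigma\cong\sigma\otimes\chi$ and hence case (b).

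Finally, suppose $\dim U=3$. If $U=U^{\perp}$ then $U$ is a maximal isotropic subspace of $(\wedge^2 V,B)$; the two standard families of maximal isotropic $3$-planes are $L\wedge V$ for a line $L\subset V$ and $\wedge^2 H$ for a hyperplane $H\subset V$, and either case produces a proper nonzero $G$-invariant subspace of $V$, contradicting irreducibility. Thus $\wedge^2 V=U\oplus U^{\perp}$ is a nondegenerate orthogonal decomposition preserved by $G$, and it remains to pull this structure back to $V$ itself. This is the main obstacle: the argument invokes the exceptional isomorphism $D_3=A_3$, under which the stabilizer in $\mathrm{GSO}(\wedge^2 V,B)$ of a nondegenerate $3$-$3$ decomposition corresponds under $\wedge^2$ to the proper similitude group $\mathrm{GSO}(C)$ of a nondegenerate symmetric form $C$ on $V$ (concretely, writing $V=W_1\otimes W_2$ with $\dim W_i=2$ for the connected component of this stabilizer, $C$ is the tensor product of the natural symplectic forms on the $W_i$). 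Consequently $\sigma(G)\subset\mathrm{GSO}(C)$, so $\sigma$ is essentially self-dual of proper orthogonal type, which is case (c).
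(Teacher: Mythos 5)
Your overall strategy matches the paper's: use the $G$-invariant similitude form $B$ on $\wedge^2V$ (under which $\wedge^2\sigma$ is essentially self-dual) to reduce to a $G$-invariant subspace $U\subset\wedge^2V$ of dimension at most three, then match $\dim U\in\{1,2,3\}$ to the three cases. The dimension-one case and the converse direction for cases (a) and (b) are in substance the same as the paper's Propositions~\ref{prop:1-subspace} and \ref{prop:2-subspace}. Two variations are worth recording. For a totally isotropic $G$-invariant $2$-plane your route is genuinely shorter and cleaner: you identify it as $L\wedge H$ for a flag $L\subset H$ in $V$, recover $L$ and $H$ canonically from the plane, and so contradict irreducibility immediately; the paper's Proposition~\ref{prop:2-subspace} instead passes to $P^{\perp}/P$, produces two isotropic $3$-spaces and an index-two subgroup, and only then derives the contradiction. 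For case (c) in (2)$\Rightarrow$(1) you use the Hodge star on $\wedge^2V$ attached to $C$ and $\eta$; the paper invokes Lemma~\ref{lem:omnibus}(4). Both work, but the Hodge star makes transparent why \emph{proper} orthogonality is the right hypothesis: improper similitudes anticommute with $*$ and swap its eigenspaces.

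The one place where you stop short of a proof is exactly the one you flag: the claim that the $\mathrm{GL}(V)$-stabilizer of a nondegenerate $3$-space $U\subset\wedge^2V$ equals $\mathrm{GSO}(C)$ for some nondegenerate symmetric $C$ on $V$. Invoking ``$D_3=A_3$'' and the parenthetical $V\cong W_1\otimes W_2$ describes what $\mathrm{GSO}(C)$ is, but does not derive the containment $\{g\in\mathrm{GL}(V):\rho(g)U=U\}\subseteq\mathrm{GO}(C)$, which is the direction you actually need in order to conclude $\sigma(G)\subset\mathrm{GSO}(C)$. The paper's Lemma~\ref{lem:omnibus}(4) supplies this via the Klein quadric: the smooth conic $Y=\mathbb{P}(U)\cap Q$ in $Q\cong\mathrm{Gr}(2,V)$ sweeps out a smooth quadric surface $T\subset\mathbb{P}(V)$, whose defining form is $C$, and $g$ preserves $U$ if and only if $g$ preserves $T$ together with the ruling $T\to Y$, i.e.\ $g\in\mathrm{GSO}(C)$. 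Some argument of this kind (or an explicit identification of $\wedge^2$ with the half-spin map for $D_3$) is needed to close the proof; otherwise your write-up is correct.
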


Toward the proof of the above theorem, we begin with a lemma.

\begin{lemma}
\label{lem:omnibus}
The bilinear space $(\wedge^2V,B)$ has the following properties.
\begin{enumerate}
\item[(1)] The image of $\rho$ is $\mathrm{GSO}(B)$.
\item[(2)] An isotropic line in $\wedge^2V$ has the form $\wedge^2Q$, where
$Q<V$ is a uniquely determined $2$-dimensional subspace of $V$.
\item[(3)] An isotropic $3$-space in $\wedge^2V$ either has the form $L\wedge V$,
where $L<V$ is a uniquely determined line, or the form $\wedge^2U$, where
$U<V$ is a uniquely determined $3$-space.
\item[(4)] Let $W < \wedge^2V$ be a $3$-space on which $B$ is nondegenerate.
Then there is a nondegenerate symmetric bilinear form $C$ on $V$
such that
\begin{equation*}
\{g\in\GL(V)\mid \rho(g)(W)=W \}=\mathrm{GSO}(C).
\end{equation*}
The form $C$ is determined by $W$ up to scalars. 
Every nondegenerate symmetric bilinear form on $V$ occurs in this
way for a suitable choice of $W$.
\end{enumerate}
\end{lemma}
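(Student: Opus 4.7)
For part (1), the key point is that $\wedge^4 S$ acts on the line $\wedge^4 V$ as multiplication by $\det S$. Applying this to the defining relation for $B$ yields
\[ B(\wedge^2 S\,\omega_1,\wedge^2 S\,\omega_2)\,\eta = (\wedge^2 S\,\omega_1)\wedge(\wedge^2 S\,\omega_2) = \wedge^4 S\,(\omega_1\wedge\omega_2) = \det(S)\,B(\omega_1,\omega_2)\,\eta, \]
so $\rho(S)$ is a similitude of $B$ with multiplier $\det(S)$. Combined with the standard identity $\det(\wedge^2 S)=\det(S)^3$, this gives $\det\rho(S)=\lambda(\rho(S))^3$, placing $\rho(S)\in\mathrm{GSO}(B)$. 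For surjectivity I would compare dimensions: $\dim\GL(V)=16=\dim\mathrm{GSO}(B)$ and $\ker\rho=\{\pm I\}$ is finite, so the closed image has dimension $16$ inside the connected group $\mathrm{GSO}(B)$ and must exhaust it.

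For part (2), the isotropy condition $B(\omega,\omega)=0$ amounts to $\omega\wedge\omega=0$ (using $\mathrm{char}\,k\neq 2$), which is precisely the Pl\"ucker relation characterizing decomposable elements of $\wedge^2 V$; writing $\omega=v_1\wedge v_2$ yields the plane $Q=kv_1+kv_2$, recovered intrinsically as $Q=\{v\in V : v\wedge\omega=0\}$, which gives both existence and uniqueness. For part (3), both proposed families are evidently isotropic $3$-spaces --- $L\wedge V$ because $\ell\wedge\ell=0$, and $\wedge^2 U$ because $\wedge^4 U=0$ when $\dim U=3$. Conversely, let $W$ be any isotropic $3$-space. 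By part (2), every nonzero $\omega\in W$ is decomposable with an associated $2$-plane $Q_\omega\subset V$, and for $\omega_1,\omega_2\in W$ the isotropy of $\omega_1+\omega_2$ together with that of each summand forces $\omega_1\wedge\omega_2=0$, hence $\dim(Q_{\omega_1}\cap Q_{\omega_2})\geq 1$. A short analysis on a basis $\omega_1,\omega_2,\omega_3$ of $W$ then shows that either all three planes $Q_{\omega_i}$ contain a single common line $L$ (and then $W=L\wedge V$), or else their sum $Q_{\omega_1}+Q_{\omega_2}+Q_{\omega_3}$ is forced to be a common $3$-space $U$ (and then $W=\wedge^2 U$). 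Uniqueness of $L$ or $U$ is encoded by $L=\bigcap_\omega Q_\omega$ or $U=\sum_\omega Q_\omega$ respectively.

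For part (4), the strategy is to set up a bijective correspondence between projective classes of nondegenerate symmetric bilinear forms $C$ on $V$ and nondegenerate $3$-spaces $W\subset\wedge^2 V$. Given $C$, I would introduce the Hodge-star-type operator $*_C:\wedge^2 V\to\wedge^2 V$ defined by $\alpha\wedge *_C\beta=\tilde C(\alpha,\beta)\eta$, where $\tilde C$ is the standard extension of $C$ to $\wedge^2 V$ via $2\times 2$ minors. Since $k$ is algebraically closed of characteristic not two, $*_C^2$ is a nonzero scalar; after rescaling it becomes an involution and yields a splitting $\wedge^2 V=W_C\oplus W_C^\perp$ into two $3$-dimensional subspaces, each nondegenerate for $B$. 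A direct computation using $C(Sv,Sw)=\lambda(S)\,C(v,w)$ and $\wedge^4 S\,\eta=\det(S)\,\eta=\lambda(S)^2\,\eta$ for $S\in\mathrm{GSO}(C)$ shows that $\wedge^2 S$ commutes with $*_C$, while any improper similitude interchanges $W_C$ with $W_C^\perp$. Hence $\mathrm{GSO}(C)\subseteq\mathrm{Stab}_{\GL(V)}(W_C)$. Both sides have dimension $7$: the left by direct count, the right from the orbit--stabilizer theorem applied to the transitive action of $\mathrm{GSO}(B)=\rho(\GL(V))$ on the $9$-dimensional open subvariety of nondegenerate $3$-spaces inside $\mathrm{Gr}(3,\wedge^2 V)$. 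Combined with the observation that improper similitudes do not preserve $W_C$, this rules out extra components and forces the above inclusion to be an equality. The main obstacle I anticipate lies in the converse direction --- showing every nondegenerate $W$ arises as $W_C$ for a \emph{unique} projective class $[C]$. For this I would use $\mathrm{PGL}(V)$-equivariance of the assignment $C\mapsto W_C$ together with transitivity of its action on both $9$-dimensional parameter spaces, so that the stabilizer equality just established promotes the map to an isomorphism of homogeneous spaces.
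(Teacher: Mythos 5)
Your parts (1)--(3) match what the paper calls easy exercises, and your argument for each is sound. For part (4) you take a genuinely different route from the paper: the paper works with the Klein quadric $Q\subset\mathbb{P}(\wedge^2 V)$, intersects it with $\mathbb{P}(W)$ to get a smooth conic $Y$, and sweeps out the lines of $\mathbb{P}(V)$ parametrized by $Y$ to obtain a smooth quadric surface $T\subset\mathbb{P}(V)$; the form $C$ is the one cutting out $T$, and the group-theoretic statement is read off from the fact that preserving $T$ together with its ruling $T\to Y$ characterizes $\mathrm{GSO}(C)$. Your approach instead constructs $*_C$ and uses the self-dual/anti-self-dual splitting $\wedge^2 V = W_C^+\oplus W_C^-$; this is cleaner algebraically and avoids the synthetic projective geometry, at the cost of requiring a bit more care with the group-theoretic endgame.

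That endgame has a gap as written. You establish $\mathrm{GSO}(C)\subseteq\mathrm{Stab}_{\GL(V)}(W_C)$ and match dimensions, which identifies $\mathrm{GSO}(C)$ with the \emph{identity component} of the stabilizer; but ``improper similitudes do not preserve $W_C$'' only excludes the other coset of $\mathrm{GO}(C)$, not a hypothetical extra component of the stabilizer lying outside $\mathrm{GO}(C)$ altogether, so it does not by itself ``rule out extra components.'' The fix is available inside your own framework and is in fact simpler than the dimension count: first note that $*_C$ is $B$-self-adjoint (from $\alpha\wedge *_C\beta=\tilde C(\alpha,\beta)\eta=\tilde C(\beta,\alpha)\eta=\beta\wedge *_C\alpha$), so the $\pm 1$ eigenspaces are automatically $B$-orthogonal --- a fact you use implicitly when writing $W_C^\perp$ for the other eigenspace but should state. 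Now if $g\in\GL(V)$ satisfies $\rho(g)W_C=W_C$, then by part (1) $\rho(g)\in\mathrm{GSO}(B)$ preserves $B$-orthogonal complements, so $\rho(g)$ preserves both eigenspaces and hence commutes with $*_C$; the same computation you used for $\mathrm{GSO}(C)$ then gives $\tilde C(\rho(g)\alpha,\rho(g)\beta)=\det(g)\,\tilde C(\alpha,\beta)$, and since $\wedge^2$ is a faithful-up-to-sign functor on $4\times 4$ matrices (if $\wedge^2 A$ is scalar then $A$ is scalar), this forces $C(g\cdot,g\cdot)$ to be a scalar multiple of $C$, i.e.\ $g\in\mathrm{GO}(C)$. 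At that point your improper-similitude observation does finish the job, yielding $g\in\mathrm{GSO}(C)$. With this replacement the dimension count becomes superfluous for the equality of stabilizers, though it is still needed (together with transitivity of $\mathrm{GSO}(B)$ on nondegenerate $3$-spaces) for the final surjectivity claim, exactly as you outline.
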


\begin{proof}
We omit the proofs of (1), (2), and (3), as they are easy exercises,
and briefly sketch the proof of (4). Let $Q$ be the quadric
hypersurface in $\mathbb{P}(\wedge^2 V)$ consisting of null vectors
for $B$. By (2), we may identify $Q$ with the Grassmannian of lines in
$\mathbb{P}(V)$. Let $W$ be a $3$-dimensional subspace of $\wedge^2V$
on which $B$ is nondegenerate, and $Y=\mathbb{P}(W)\cap Q$ be the
smooth plane conic defined by $B\vert_{W}$. Let $T$ be the subvariety
of $\mathbb{P}(V)$ obtained by taking the union of the lines in
$\mathbb{P}(V)$ corresponding to points of $Y$. It is easily verified
that $T$ is a smooth quadric hypersurface, and so there is a
nondegenerate symmetric bilinear form $C$ on $V$, unique up to
scalars, such that $T$ has equation $C(v,v)=0$. An element
$g\in\mathrm{GL}(V)$ preserves $T$ together with the ruling $T\to Y$
sending a line in $T$ to the corresponding point of $Y$ if and only if
$g\in\mathrm{GSO}(C)$. From the construction, the set of $g$ with this
property is the same as the set of all $g$ such that $\rho(g)(W)=W$. The
last claim in (4) follows from the fact that $\mathrm{GL}(V)$ acts
transitively on the set of all nondegenerate symmetric bilinear forms on
$V$.
\end{proof}

\begin{proposition}
\label{prop:1-subspace}
The representation $(\sigma,V)$ is essentially self-dual of symplectic
type if and only if $\wedge^2V$ contains a $G$-invariant line.
\end{proposition}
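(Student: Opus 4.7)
The forward direction is essentially a tautology: if $\sigma$ is essentially self-dual of symplectic type, then by definition there is a character $\chi$ of $G$ with $\chi^{-1}$ embedding as a subrepresentation of $\wedge^2\sigma$, and the image of that embedding is precisely a $G$-invariant line in $\wedge^2V$.

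For the converse, my plan is as follows. Pick a $G$-invariant line $L\subset\wedge^2V$, choose a nonzero generator $\omega\in L$, and let $\psi$ denote the character by which $G$ acts on $L$, so that $\rho(g)\omega=\psi(g)\omega$ for every $g\in G$. I would then proceed in two stages: first show that $\omega$ must be a nondegenerate bivector (equivalently, $\omega\wedge\omega\neq 0$ in $\wedge^4V$), and then use $\omega$ to construct the intertwiner that certifies essential self-duality of symplectic type.

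The first stage is the only substantive step, and it is where the standing irreducibility of $\sigma$ is used. Suppose toward contradiction that $\omega$ were decomposable, $\omega=v_1\wedge v_2$ for linearly independent $v_1,v_2\in V$. Then $L=\wedge^2Q$ for the $2$-plane $Q=\mathrm{span}(v_1,v_2)$, and the uniqueness clause in Lemma \ref{lem:omnibus}(2) says that $Q$ is determined by $L$ alone. Hence $G$-invariance of $L$ forces $G$-invariance of $Q$, contradicting the irreducibility of $\sigma$ assumed in Theorem \ref{thm:galois-side}. So $\omega$ must be nondegenerate. This is the main obstacle, but Lemma \ref{lem:omnibus}(2) is tailor-made to dispatch it.

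For the second stage, I view $\omega$ as a linear map $T_\omega\colon\w V\to V$ under the canonical identification $V\otimes V\cong\mathrm{Hom}(\w V,V)$; nondegeneracy of $\omega$ is equivalent to $T_\omega$ being an isomorphism of vector spaces. A routine unwinding of $\rho(g)\omega=\psi(g)\omega$ under this identification yields the intertwining relation $T_\omega\circ\w\sigma(g)=\psi(g)^{-1}\sigma(g)\circ T_\omega$, showing that $T_\omega$ is a $G$-isomorphism from $\w\sigma$ onto $\psi^{-1}\otimes\sigma$. Setting $\chi=\psi^{-1}$ gives $\w\sigma\cong\chi\otimes\sigma$, and by construction $\chi^{-1}=\psi$ is the character of a subrepresentation of $\wedge^2\sigma$. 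Hence $\sigma$ is essentially self-dual of symplectic type, completing the proof.
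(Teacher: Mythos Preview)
Your proof is correct and is essentially the detailed version of what the paper dismisses as ``immediate from the definitions.'' One minor remark: your first stage (invoking Lemma~\ref{lem:omnibus}(2) to rule out a decomposable $\omega$) is not actually needed. Once you have the nonzero intertwiner $T_\omega$ between $\w\sigma$ and $\psi^{-1}\otimes\sigma$, both representations are irreducible (since $\sigma$ is), so Schur's lemma forces $T_\omega$ to be an isomorphism directly---nondegeneracy of $\omega$ then comes for free rather than needing a separate geometric argument.
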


\begin{proof}
This follows immediately from the definitions.
\end{proof}

\begin{lemma}
\label{lem:no-3-space}
There is no $G$-invariant isotropic $3$-space in $\wedge^2V$.
\end{lemma}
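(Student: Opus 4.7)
The plan is to leverage part (3) of Lemma~\ref{lem:omnibus} directly, combined with the irreducibility hypothesis on $\sigma$. That lemma classifies isotropic $3$-spaces in $(\wedge^2V,B)$ into exactly two shapes: either $L \wedge V$ for a uniquely determined line $L < V$, or $\wedge^2 U$ for a uniquely determined $3$-dimensional subspace $U < V$.

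First I would suppose, for contradiction, that $W \subset \wedge^2 V$ is a $G$-invariant isotropic $3$-space. Since $G$ acts on $V$ via $\sigma$ and on $\wedge^2 V$ via $\rho \circ \sigma$, the set of isotropic $3$-spaces is $G$-stable, and the two classes described above (the ``line type'' and the ``$3$-space type'') are preserved by $\GL(V)$, hence each class is $G$-stable. So $W$ falls into exactly one of these two cases.

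Next I would invoke the \emph{uniqueness} clauses in Lemma~\ref{lem:omnibus}(3): in the first case the line $L$ is uniquely determined by $W = L \wedge V$, and in the second case the $3$-space $U$ is uniquely determined by $W = \wedge^2 U$. Because $W$ is $G$-invariant and because the assignments $W \mapsto L$ and $W \mapsto U$ are canonical (hence $G$-equivariant), the corresponding subspace $L$ or $U$ of $V$ must itself be $G$-invariant. But $L$ and $U$ are nonzero proper subspaces of $V$, contradicting the irreducibility of $\sigma$. This contradiction completes the proof, and I expect no real obstacle: the only point requiring a line of care is the equivariance of the uniqueness assignment, which is essentially tautological from the explicit descriptions in Lemma~\ref{lem:omnibus}(3).
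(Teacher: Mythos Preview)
Your proposal is correct and follows essentially the same argument as the paper: invoke Lemma~\ref{lem:omnibus}(3) to write the isotropic $3$-space as $L\wedge V$ or $\wedge^2 U$, then use the uniqueness of $L$ or $U$ together with $G$-invariance to force $L$ or $U$ to be a proper nonzero $G$-invariant subspace of $V$, contradicting irreducibility of $\sigma$. The extra remark that the two types of isotropic $3$-spaces are each $\GL(V)$-stable is a harmless elaboration of what the paper leaves implicit.
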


\begin{proof}
If there is such a $3$-space, then, by Lemma \ref{lem:omnibus}, it
is of the form $L\wedge V$ or of the form $\wedge^2U$. 
Note that $G$-invariance of the $3$-space combined with the uniqueness
statements in Lemma~\ref{lem:omnibus} imply that $L$ or $U$ is
also $G$-invariant, which contradicts irreducibility of $\sigma$.
\end{proof}

The proof of the following proposition would be substantially
simpler if the action of $G$ on $\wedge^2V$ were completely
reducible. However, in the current generality this need not be true.

\begin{proposition}
\label{prop:2-subspace}
Suppose that $\sigma$ is not essentially self-dual of
symplectic type. Then $\sigma$ has a nontrivial quadratic self-twist if
and only if $\wedge^2V$ contains a $G$-invariant $2$-space.
\end{proposition}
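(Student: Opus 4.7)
The overall strategy is to translate between subspaces of $V$ and subspaces of $\wedge^2V$ via the dictionary provided by Lemma~\ref{lem:omnibus}, and to handle the two directions separately.

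For the implication that a nontrivial quadratic self-twist produces a $G$-invariant two-dimensional subspace of $\wedge^2V$, I would start from $\sigma\cong\chi\otimes\sigma$ and apply Schur's lemma, together with the irreducibility of $\sigma$, to produce an invertible $T\in\GL(V)$, unique up to scalar, satisfying $T\sigma(g)=\chi(g)\sigma(g)T$ for all $g\in G$. Then $T^2$ is central in $\sigma(G)$, hence a scalar, and since $k$ is algebraically closed I may rescale to arrange $T^2=I$. Because $\mathrm{char}(k)\neq 2$ this yields an eigenspace decomposition $V=V^+\oplus V^-$, on which $\ker\chi$ acts block-diagonally while elements outside $\ker\chi$ swap the two summands. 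An invertibility argument forces $\dim V^+=\dim V^-=2$, and the two-dimensional subspace $\wedge^2V^+\oplus\wedge^2V^-$ is then $G$-invariant.

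For the converse, I would take a $G$-invariant two-space $W\subset\wedge^2V$ and split into cases according to the rank of $B|_W$. If $B|_W=0$, a short Pl\"ucker argument combined with parts (2) and (3) of Lemma~\ref{lem:omnibus} should force $W=L\wedge U$ for a uniquely determined line $L\subset V$ and three-space $U\subset V$ with $L\subset U$; the uniqueness statement makes both $L$ and $U$ individually $G$-invariant, contradicting the irreducibility of $\sigma$. If $B|_W$ has rank one, its one-dimensional radical is a $G$-invariant isotropic line in $\wedge^2V$, and Proposition~\ref{prop:1-subspace} would then make $\sigma$ essentially self-dual of symplectic type, contrary to the standing hypothesis.

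The heart of the argument, and the step I expect to be the main obstacle, is the case when $B|_W$ is nondegenerate. Over the algebraically closed field $k$ such a form is hyperbolic, so $W$ contains exactly two isotropic lines, which by Lemma~\ref{lem:omnibus}(2) have the form $\wedge^2Q_1$ and $\wedge^2Q_2$ for uniquely determined two-dimensional subspaces $Q_i\subset V$. If $G$ fixed each line individually then Proposition~\ref{prop:1-subspace} would again give the excluded symplectic self-duality, so $G$ must interchange the two lines and hence swap $Q_1$ and $Q_2$. I would next argue $V=Q_1\oplus Q_2$ by observing that $Q_1\cap Q_2$ is $G$-invariant, using irreducibility to rule out a one-dimensional intersection and the distinctness $Q_1\neq Q_2$ to rule out a two-dimensional one. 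Finally, defining $\chi:G\to\{\pm 1\}$ to be $+1$ on the index-two subgroup preserving each $Q_i$ and $-1$ on the other coset, and letting $T$ act as $+1$ on $Q_1$ and $-1$ on $Q_2$, a direct check on both cosets would give $T\sigma(g)=\chi(g)\sigma(g)T$ and hence the required nontrivial quadratic self-twist.
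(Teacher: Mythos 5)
Your proof is correct, and in one place it takes a genuinely different and shorter route than the paper.

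In the forward direction, your Schur--intertwiner construction of $T$ with $T^2=I$ and the eigenspace splitting $V=V^+\oplus V^-$ is essentially the same as the paper's appeal to Clifford theory (the subspace $W$ in the paper's proof is your $V^+$, and the $G$-translates of $\wedge^2 W$ give your $\wedge^2 V^+\oplus\wedge^2 V^-$), just presented more explicitly. Your treatment of the rank-$1$ case and of the nondegenerate case also parallels the paper's: the radical of $B|_W$ being $G$-invariant kills rank $1$ via Proposition~\ref{prop:1-subspace}, and in the nondegenerate case both proofs pick out the two isotropic lines, argue by the symplectic-type exclusion that $G$ must swap them, and recover the quadratic self-twist (the paper cites Clifford theory where you construct the intertwiner $T$ directly). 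Your version is actually cleaner here, since the paper's sentence ``we now repeat the argument of the previous paragraph'' elides the fact that the reason the two lines cannot be individually fixed is different in this case (Proposition~\ref{prop:1-subspace} rather than Lemma~\ref{lem:no-3-space}).

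The real divergence is in the totally isotropic case. The paper proves $B|_P$ cannot be identically zero by a fairly long contradiction: pass to $P^\perp/P$, rule out a degenerate quotient via Lemma~\ref{lem:no-3-space}, produce two isotropic $3$-spaces permuted by $G$, extract an index-$2$ subgroup $H$, and finally contradict irreducibility by forming $L+\sigma(g_0)L$. Your approach instead identifies an isotropic $2$-plane in $\wedge^2V$ as a line on the Klein quadric and uses the standard fact that such a line is a pencil $\{Q : L\subset Q\subset U\}$, i.e.\ $W=L\wedge U$ for a uniquely determined flag $L\subset U$; the uniqueness makes $L$ and $U$ individually $G$-invariant, contradicting irreducibility in one stroke. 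This is substantially shorter. Do note, however, that this fact is not literally part of Lemma~\ref{lem:omnibus} as stated there (the lemma describes isotropic $1$-spaces and $3$-spaces but not $2$-spaces), so you would need to supply it: for instance, by observing that $W$ lies in a unique isotropic $3$-space of each family, say $L\wedge V$ and $\wedge^2 U$, and that $(L\wedge V)\cap\wedge^2 U$ is $2$-dimensional precisely when $L\subset U$, in which case it equals $L\wedge U$. With that addition your argument is complete and arguably tidier than the published one.
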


\begin{proof}
Suppose first that $\sigma$ has a nontrivial quadratic self-twist, say 
by the character $\chi$. Let $H$ be the kernel of $\chi$ and recall
that, by Clifford theory, $\sigma\vert_{H}$ is the sum of two
$2$-dimensional subrepresentations. Let $W<V$ be the $H$-invariant
$2$-space on which one of these subrepresentations is realized. Then
it is easy to see that the $G$-translates of $\wedge^2W$ span a
$G$-invariant $2$-space in $\wedge^2V$. (The reader should compare this 
with the proof of Proposition~\ref{prop:gl2-quadratic}. Indeed, the $2$-dimensional 
$G$-invariant subspace is the induction to $G$ of the determinant character
of the representation of $H$ on $W$. Recall that if $\sigma$ is a Galois representation
that corresponds to an automorphic representation $\pi$, then 
the determinant character of $\sigma$ corresponds to the central character of $\pi$.)

Now suppose that $\wedge^2V$ contains a $G$-invariant $2$-space $P$. The
kernel of $B\vert_{P}$ is $G$-invariant and thus is either $\{0\}$ or
$P$, for the first hypothesis implies that there can be no
$G$-invariant line in $\wedge^2V$. Suppose that the kernel is $P$. Then the
$4$-space $P^{\perp}$ contains $P$ and the form $B$ and the action
of $G$ pass down to $P^{\perp}/P$. Suppose that $B$ has a
nontrivial kernel in $P^{\perp}/P$. This kernel cannot be all of
$P^{\perp}/P$, for then $P^{\perp}$ would be an isotropic $4$-space
in $\wedge^2V$. Thus the kernel must be a line in $P^{\perp}/P$ and this
kernel is necessarily $G$-invariant. The preimage of this line in
$P^{\perp}$ is an isotropic $G$-invariant $3$-space in $\wedge^2V$, contrary
to Lemma \ref{lem:no-3-space}. We conclude that $(P^{\perp}/P,B)$ is
a nondegenerate quadratic $2$-space. Such a space is isomorphic to
a hyperbolic plane and hence contains exactly two isotropic lines.
The action of $G$ on $P^{\perp}/P$ is by similitudes, hence it permutes these
lines. Taking the preimage in $P^{\perp}$, we obtain two isotropic
$3$-spaces $\Lambda_1$ and $\Lambda_2$ in $\wedge^2V$ that are permuted by $G$. By
Lemma \ref{lem:no-3-space}, these isotropic $G$-spaces cannot be
fixed by $G$ and we conclude that the stabilizer of each is a
subgroup $H$ of index two in $G$. By repeating the argument of Lemma
\ref{lem:no-3-space} with $H$ in place of $G$, we conclude that
there is either a line $L<V$ or a $3$-space $U<V$ that is
$H$-invariant. By replacing $\sigma$ by $\sigma^{\vee}$ if necessary, we may
assume that the former possibility holds. Let $g_0\in G-H$. Then the
$2$-space $L+\sigma(g_0)L$ is easily seen to be $G$-invariant, contrary
to the irreducibility of $\sigma$ and $\sigma^{\vee}$. This contradiction
finally allows us to conclude that the restriction of $B$ to $P$ is
nondegenerate.

We now repeat the argument of the previous paragraph with the space
$P$ in place of the space $P^{\perp}/P$. It yields an index two
subgroup $H$ of $G$ and two isotropic lines in $\wedge^2V$ that are fixed by
$H$. By Lemma \ref{lem:omnibus}, each of these lines has the form
$\wedge^2Q$ with $Q<V$ a $2$-space. By the uniqueness assertion from
Lemma \ref{lem:omnibus}, each of these $2$-spaces is $H$-invariant.
It now follows from Clifford theory that if $\chi$ is the nontrivial
$\{\pm 1\}$-valued character on $G$ whose kernel is $H$, then
$\chi\otimes\sigma\cong\sigma$. Thus $\sigma$ has a nontrivial quadratic
self-twist, as required.
\end{proof}

\begin{proposition}
\label{prop:3-subspace}
Suppose that $\sigma$ is neither essentially self-dual of symplectic
type nor has a nontrivial quadratic self-twist. Then $\sigma$ is
essentially self-dual of proper orthogonal type if and only if $\wedge^2V$
contains a $G$-invariant $3$-space.
\end{proposition}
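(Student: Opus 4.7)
The plan is to handle the two directions separately, leveraging Lemma \ref{lem:omnibus}(4) as the main bridge between nondegenerate symmetric forms on $V$ and certain $3$-spaces in $\wedge^2V$, together with the earlier Propositions \ref{prop:1-subspace} and \ref{prop:2-subspace} to rule out lower-dimensional $G$-invariant subspaces under our hypotheses.

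For the forward direction, suppose $\sigma$ is essentially self-dual of proper orthogonal type. Then by definition there is a nondegenerate symmetric bilinear form $C$ on $V$ with $\sigma(G)\subset \mathrm{GSO}(C)$. By the final assertion of Lemma \ref{lem:omnibus}(4), this $C$ arises from some $3$-space $W<\wedge^2V$ on which $B$ is nondegenerate, in the sense that $\{g\in \mathrm{GL}(V) : \rho(g)(W)=W\}=\mathrm{GSO}(C)$. Since $\sigma(G)$ is contained in the right-hand side, $W$ is $G$-invariant.

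For the reverse direction, suppose $W<\wedge^2V$ is a $G$-invariant $3$-space. The key step will be to show that $B\vert_W$ is nondegenerate. Its kernel $K=W\cap W^{\perp}$ is a $G$-invariant subspace of $W$, so $\dim K\in\{0,1,2,3\}$. If $\dim K=3$, then $W$ is a $G$-invariant isotropic $3$-space, contradicting Lemma \ref{lem:no-3-space}. If $\dim K=1$ or $2$, then $K$ is a $G$-invariant line or $2$-space in $\wedge^2V$; the former is excluded by Proposition \ref{prop:1-subspace} together with the hypothesis that $\sigma$ is not essentially self-dual of symplectic type, and the latter is excluded by Proposition \ref{prop:2-subspace} together with the hypothesis that $\sigma$ has no nontrivial quadratic self-twist. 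Hence $K=\{0\}$ and $B\vert_W$ is nondegenerate.

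Now Lemma \ref{lem:omnibus}(4) produces a nondegenerate symmetric bilinear form $C$ on $V$, unique up to scalar, such that $\{g\in \mathrm{GL}(V) : \rho(g)(W)=W\}=\mathrm{GSO}(C)$. Since $W$ is $G$-invariant, $\sigma(G)\subset\mathrm{GSO}(C)$, which says precisely that $\sigma$ is essentially self-dual of proper orthogonal type. The only delicate point in the whole argument is the dimension count on $K$; once the three forbidden dimensions are ruled out via the preceding propositions, everything else is a direct application of the structural lemma. I do not anticipate serious obstacles beyond the careful bookkeeping already set up in Lemma \ref{lem:omnibus} and Lemma \ref{lem:no-3-space}.
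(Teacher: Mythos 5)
Your proof is correct and follows essentially the same route as the paper's: both directions rest on Lemma \ref{lem:omnibus}(4), with the reverse implication reduced to showing $B\vert_W$ is nondegenerate by ruling out a kernel of dimension $3$ via Lemma \ref{lem:no-3-space} and of dimension $1$ or $2$ via Propositions \ref{prop:1-subspace} and \ref{prop:2-subspace}. The only difference is cosmetic: you spell out the kernel as $W\cap W^{\perp}$ and enumerate the dimension cases explicitly, whereas the paper states the same exclusions more tersely.
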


\begin{proof}
Let $W < \wedge^2V$ be a $G$-invariant $3$-space. By Lemma
\ref{lem:no-3-space}, $W$ cannot be isotropic. The group $G$ acts on $W$ by
similitudes and so the kernel of $B\vert_{W}$ is $G$-invariant. We
have just observed that this kernel cannot be $W$ and, by the
hypotheses and the preceding results, it cannot be of dimension $1$
or $2$. Thus the restriction of $B$ to $W$ is nondegenerate. It
follows from Lemma \ref{lem:omnibus} that there is a nondegenerate
symmetric bilinear form $C$ on $V$ such that
$\sigma(G)\subset\mathrm{GSO}(C)$. This implies that $\sigma$ is
essentially self-dual of proper orthogonal type.

Now suppose that $\sigma$ is essentially self-dual of proper orthogonal
type, so that there is a nondegenerate bilinear form $C$ on $V$
such that $\sigma(G)\subset\mathrm{GSO}(C)$. By Lemma
\ref{lem:omnibus}, there is a $3$-space $W \subset \wedge^2V$ such that
$\rho(\mathrm{GSO}(C))$ preserves $W$. In particular, $W$ is
$G$-invariant, and the reverse implication is proved.
\end{proof}

\begin{proof}[Proof of Theorem~\ref{thm:galois-side}]
The representation $\wedge^2\sigma$ is essentially self-dual. Thus if
it has any proper nonzero $G$-invariant subspace, it necessarily
has such a subspace of dimension at most $3$. The proof follows from 
Propositions~\ref{prop:1-subspace}, \ref{prop:2-subspace} and \ref{prop:3-subspace}. 
\end{proof}

\begin{rem}
Using the above theorem it is possible to explain the seemingly strange 
exception in {\bf (iii)}($\alpha$) of Theorem~\ref{thm:main}. 
Let $H$ be a subgroup of index $2$ in a group $G$, and let $(\tau,W)$ 
be a two-dimensional irreducible ($\ell$-adic) representation of $H$. 
Let $\sigma := {\rm As}(\tau)$ be the corresponding $4$-dimensional 
representation of $G$ as in Section~\ref{sec:asai}, where `$\Gamma$' acts
via $G/H$ which switches the factors of $W\otimes W$.
Assume that $\sigma$ is irreducible.  
Since $\tau$ is $2$-dimensional, there is a symplectic form $S$ on 
$W$ that $\tau$ preserves up to similitudes. 
It is easy to see that $\sigma$ preserves $S \otimes S$ on $W \otimes W$ 
up to similitudes. In fact, 
$\sigma$ is an essentially self-dual representation of improper orthogonal 
type. Further, one can see that $\tau$ is dihedral if and only if
$\sigma$ has a nontrivial quadratic self-twist. By the above theorem, one 
concludes that $\wedge^2\sigma$ is reducible if and only if $\tau$ is dihedral. 
\end{rem}

\bigskip

\noindent\address{Department of Mathematics \\ Oklahoma State University \\ Stillwater, OK 74078, USA}

\smallskip

\noindent\email{asgari@math.okstate.edu \\ araghur@math.okstate.edu}

\end{document}